\numberwithin{equation}{section}
\def\cb{{\mathcal B}}
\def\ch{{\mathcal H}}
\def\ga{{\mathfrak A}}
\def\bc{{\mathbb C}}
\def\bj{{\mathbb J}}
\def\bn{{\mathbb N}}
\def\br{{\mathbb R}}
\def\bt{{\mathbb T}}
\def\bz{{\mathbb Z}}
\def\a{\alpha}
\def\g{\gamma}
\def\f{\varphi}  
\def\om{\omega}
\newtheorem{thm}{Theorem}[section]
\newtheorem{lem}[thm]{Lemma}
\newtheorem{cor}[thm]{Corollary}
\newtheorem{prop}[thm]{Proposition}
\theoremstyle{definition}
\begin{document}
\title[ de Finetti theorem on the infinite non-commutative torus]
{ de Finetti theorem on the infinite non-commutative torus}

\author{Vitonofrio Crismale}
\address{Vitonofrio Crismale\\
Dipartimento di Matematica\\
Universit\`{a} degli studi di Bari\\
Via E. Orabona, 4, 70125 Bari, Italy}
\email{\texttt{vitonofrio.crismale@uniba.it}}

\author{Simone Del Vecchio}
\address{Simone Del Vecchio\\
Dipartimento di Matematica\\
Universit\`{a} degli studi di Bari\\
Via E. Orabona, 4, 70125 Bari, Italy}
\email{\texttt{simone.delvecchio@uniba.it}}

\author{Maria Elena Griseta}
\address{Maria Elena Griseta\\
Dipartimento di Matematica\\
Universit\`{a} degli studi di Bari\\
Via E. Orabona, 4, 70125 Bari, Italy}
\email{\texttt{mariaelena.griseta@uniba.it}}

\author{Stefano Rossi}
\address{Stefano Rossi\\
Dipartimento di Matematica\\
Universit\`{a} degli studi di Bari\\
Via E. Orabona, 4, 70125 Bari, Italy}
\email{\texttt{stefano.rossi@uniba.it}}

\begin{abstract}

The set of spreadable states on an infinite non-commutive torus $\mathbb{A}_\a^\bz$ is 
determined for all values of the deformation parameter $\a$. If $\frac{\a}{2\pi}$ is irrational, the canonical
trace is the only  spreadable state. If $\frac{\a}{2\pi}$ is rational, the set of all spreadable states is a Bauer simplex.
Moreover, its boundary is the set of all infinite products  of a single state
on $C(\bt)$, which is invariant under all rotations
by $n_0$-th roots of unity, where $n_0=p_1^{\{\frac{m_1}{2}\}}\cdots p_r^{\{\frac{m_r}{2}\}}$, with
$\frac{n}{2}$ being $\frac{n}{2}$ for $n$ even and $\frac{n-1}{2}$ for $n$ odd, if $\frac{q_1^{n_1}\ldots q_s^{n_s}}{p_1^{m_1}\ldots p_r^{m_r}}$ is the representation of $\frac{\a}{2\pi}$ in lowest terms.\\
Finally, the simplex of all stationary states on  $\mathbb{A}_\a^\bz$  is proved to be the Poulsen simplex for all
values of the deformation parameter $\a$.

\vskip0.1cm\noindent \\
{\bf Mathematics Subject Classification}:  46L55, 46L53, 60G09, 60G10.\\
{\bf Key words}: Non-commutative torus, spreadable states, stationary states, non-commutative ergodic theory,
product states, Choquet theory.
\end{abstract}

\maketitle

\section{Introduction}

In Classical Probability, a sequence of random variable is exchangeable if its joint distribution
is invariant under all finite permutations. Indipendent, identically distributed sequences of random variables
are obvious instances of exchangeable sequences. The general picture is not too far from this particular example, in that (the joint distribution of)
an exchangeable sequence is a mixture of i.i.d. variables, by virtue of a very well-known result  essentially due to de Finetti,  see \cite{dF} for the original formulation of the theorem and \cite{HS} for later generalizations.
Arguably, it is more striking  that exchangeability is the same as spreadability, a seemingly
weaker symmetry where all subsquences of the given sequence have the same joint
distribution, as proved  by Ryll-Nardzewski, \cite{R}.\\
Distributional symmetries such as exchangeability and spreadability can also be studied at the more generel level of
non commutative $C^*$-dynamical systems. This can be done as soon as the state space of each single variable
is replaced with a (possibly non-commutative) $C^*$-algebra, often referred to as the sample algebra, the state space of the whole process with some infinite product ({\it e.g.} free,  graded tensor, tensor) of this $C^*$-algebra with itself, and the joint distribution with a state (a normalized positive linear functional) on the product $C^*$-algebra, see \cite{CriFid}. Notably, the infinite-product structure allows for a natural action of those symmetry groups naturally arising in Classical Probability.\\
To our knowledge, the first of these generalizations was considered in the seminal paper of  St\o rmer \cite{Sto},
where, among other things, it was shown that the set of exchangeable states on the (minimal) infinite tensor
product of a $C^*$-algebra with itself is a Choquet simplex, whose extreme states are precisely infinite products of a single state on the sample algebra, thus extending the analysis by Hewitt and Savage \cite{HS} for symmetric measures on an infinite Tykhonov product of a given compact state space .\\
The paper \cite{CFCMP} addresses  the same problem for the CAR algebra, whose grading comes into play and must be taken into account. Indeed, the set of exchangeable states 
 is again a Choquet simplex; however,  its extreme states are now infinite products of a single \emph{even} state on 
the sample algebra $\mathbb{M}_2$, the complex $2$ by $2$ matrices.\\
The spreadable states on the CAR algebra, instead, are the main focus of  \cite{CDRCAR}, where they are proved to be the same as exchangeable states.
Moreover, the CAR algebra is not only a natural setting where a Ryll-Nardzewski theorem is obtained, but also
a model where even rotatable states coincide with exchangeable states (a fully-fledged non-commutative version of Freedman's theorem does hold on the CCR algebra where invariance under rotations rules out non-gaussian extreme states,  \cite{CDMR}). Very recent work on de Finetti-type theorems in non-commutative settings can be
found in \cite{V}, where the author proves that the theorem holds for an infinite  tensor product twisted by the Klein group.\\
In this paper, we draw our attention to the states of the infinite non-commutative torus $\mathbb{A}_\a^\bz$, {\it i.e.} the universal $C^*$-algebra generated by a countable set of unitaries
$\{u_l:  l\in\bz\}$ satisfying the commutation relations $u_lu_k=e^{i2\pi\a}u_ku_l$ for any
$l, k\in\bz$ with $l<k$, where $\a$ is a real parameter.
Naively, elements of this $C^*$-algebra can be thought of as functions in the infinite
 coordinates $u_l$ which take their values in $\bt$ but do not commute. 
States on  $\mathbb{A}_\a^\bz$ can  be accordingly  interpreted as suitable quantum stochastic processes.\\
Because the commutation rules are affected by index exchange, 
there is no natural way in which permutations may be implemented as $*$-automorphism of $\mathbb{A}_\a^\bz$.  Even so,
strictly increasing maps of
$\bz$ into itself do act through endomorphisms by displacing the indices of the generators. This  makes it possible to consider
spreadability in our setting all the same. In that regard, the deformation parameter $\a$ plays a key  role. Indeed, if
$\frac{\a}{2\pi}$ is irrational, then the set of spreadable states is the singleton of the canonical trace of
$\mathbb{A}_\a^\bz$, Theorem \ref{spreadability}. When $\frac{\a}{2\pi}$ is rational, far more spreadable states exist. More precisely, in Theorem \ref{spreadability} we show they make up a Bauer simplex whose extreme states are infinite products  of a single state
on $C(\bt)$, the continuous functions on the one-dimensional torus $\bt$, that must be invariant under all rotations
by $n_0$-th roots of unity, where $n_0$ is a natural number associated with $\frac{\a}{2\pi}$.
The ultimate reason why this happens is that, for $\frac{\a}{2\pi}$ rational, $\mathbb{A}_\a^\bz$ contains
an abelian subalgebra isomorphic with $C(\bt^\bn)$, the continuous functions on the infinite Tychonov product
$\bt^\bn$. Phrased differently, our non-commutative model is as large as to host all classical $\bt$-valued sequences of random variables.\\
Finally, we lavish our attention on stationary states on $\mathbb{A}_\a^\bz$, namely those states invariant under the action of the shift $\tau$, $\tau(u_l)=u_{l+1}$, $l$ in $\bz$. Regardless of whether
$\frac{\a}{2\pi}$ is rational or not, the compact convex set of all stationary states is no longer a Bauer simplex.
Quite the opposite, it is a metrizable Choquet simplex with dense extreme set, as we show in  Theorem \ref{Poulsen}. Therefore, the set of stationary set is  affinely homeomorphic with the Poulsen simplex \cite{P}
thanks to a well-known deep result of Lindenstrauss {\it et al.} \cite[Theorem 2.3]{Lind} that, up to affine homeomorphisms, there is only one metrizable
Choquet simplex with dense extreme set.

\section{Preliminaries}

We denote by $\mathbb{A}_\a^\bz$ the infinite torus with deformation parameter $\a\in\br$,
that is the universal $C^*$-algebra generated by a countable set of unitaries
$\{u_l:  l\in\bz\}$ satisfying the commutation relations $u_lu_k=e^{i2\pi\a}u_ku_l$ for any
$l, k\in\bz$ with $l<k$.\\
By universality, associated with any strictly increasing map $h:\bz\rightarrow\bz$,  there is a $*$-endomorphism $\a_h$ acting on $\mathbb{A}_\a^\bz$ 
completely determined by 
\begin{equation}
\a_h(u_l)= u_{h(l)}\,, \textrm{for all}\,\, l\in \bz\,.
\end{equation}
Since $\a_{hg}=\a_{h}\circ\a_{g}$, for all increasing $h, g:\bz\rightarrow\bz$, we have that $\mathbb{A}_\a^\bz$ is acted upon by the semigroup 
of all strictly increasing maps.\\
For $\mathfrak{z}=(z_l)_{l\in\bz}\in\bt^\bz$, the infinite Tikhonov product of $\bt$ with itself, $\gamma_\mathfrak{z}$ is the automorphism of 
$\mathbb{A}_\a^\bz$ determined by

\begin{equation}\label{nondiagonalgauge}
\gamma_\mathfrak{z}(u_l)= z_lu_l\,, \quad\textrm{for all}\,\, l\in \bz\,.
\end{equation}

By averaging the action of $\bt^\bz$ (with respect to its Haar measure), we get the (faithful) canonical
trace ${\rm tr}$ of $\mathbb{A}_\a^\bz$, which is completely determined by

\begin{equation}
{\rm tr}(u_{l_1}^{k_1}\cdots u_{l_n}^{k_n})= \delta_{k_1, 0}\cdots\delta_{k_n, 0}\,, \quad n\in\bn, \,l_1, \ldots, l_n, k_1, \ldots k_n\in \bz\, .
\end{equation}
We next recall that $\bt$ acts as well on our torus through the so-called gauge automorphisms $\gamma_z$, determined by $\g_z(u_l):=zu_l$, for all $z$ in $\bt$. As a straightforward consequence of the gauge action, we see that, for every $l$ in $\bz$, the subalgebra $C^*(u_l)$ is canonically isomorphic with $C(\bt)$ (in the isomorphism that sends $u_l$ to the coordinate function $z$).
In the sequel, we will often identify $C^*(u_l)$ and $C(\bt)$ through this isomorphism.\\		
We denote by $E$ the conditional
expectation obtained by averaging the gauge action of $\bt$. Its action on monomials in the generators is 
\begin{equation}
E(u_{l_1}^{k_1}\cdots u_{l_n}^{k_n})=u_{l_1}^{k_1}\cdots u_{l_n}^{k_n}\delta_{k_1+\cdots+k_n, 0}, \quad n\in\bn, \,l_1, \ldots, l_n, k_1, \ldots k_n\in \bz\, .
\end{equation}
For $m$ in $\bz$, we can consider the $m$-th
 spectral eigenspace of $\mathbb{A}_\a^\bz$, defined
$$(\mathbb{A}_\a^\bz)^m:={\rm span}\{u_{l_1}^{k_1}\cdots u_{l_n}^{k_n}:  n\in\bn, l_1, \ldots, l_n\in\bz, k_1+\cdots+k_n=m\}\,.$$
If an element $a$ belongs to $(\mathbb{A}_\a^\bz)^m$ for some $m$ in $\bz$, we say that $a$ has degree $m$, and we simply write this as
$\partial a=m$.\\
Any two different spectral eigenspaces are clearly linearly independent, and the linear span
of all eigenspaces is a dense subalgebra of $\mathbb{A}_\a^\bz$.\\
In what follows, we will need a few basic notions from the ergodic theory of $C^*$-dynamical systems, which we next  recall.
A $C^*$-dynamical system is a triple
$(\ga, H, \beta)$, where $\ga$ is a (unital) $C^*$-algebra, $H$ is a (discrete) group, and
$\beta: H\rightarrow {\rm Aut}(\ga)$ is a group homomorphism
from $H$ to ${\rm Aut}(\ga)$, the group of all $*$-automorphisms of
$\ga$. A state $\om$ on $\ga$ is said to be invariant if  $\om\circ\beta_h=\om$ for all $h\in H$.
On the GNS representation $(\ch_\om, \pi_\om, \xi_\om)$  of any such state $\om$, the action $\beta$ of $H$ can be implemented unitarily. More precisely, for each $h\in H$, $U_h^\om \pi_\om(a)\xi_\om:=\pi_\om(\beta_h(a))\xi_\om$, $a\in\ga$, defines a unitary on $\ch_\om$ such that $U_h^\om\pi_\om(a)U_{h^{-1}}^{\om}=\pi_\om(\beta_h(a))$ for all
$a\in\ga$. Lastly, a $C^*$-dynamical system $(\ga, H, \beta)$ is said to be $H$-abelian if, for any
$H$-invariant state $\om$, the family
$E_\om \pi_\om(\ga) E_\om$ is abelian, where $E_\om$ is the orthogonal projection
onto  $\ch_\om^H:=\{\xi\in\ch_\om: U_h^\om \xi=\xi\,, \textrm{for all}\, h\in H\}$.
$H$-abeliannes is often proved to hold as a consequence of a stronger property, asymptotical abelianness, see {\it e.g.} \cite[Proposition 3.1.16]{S}.
A dynamical system $(\ga, H, \beta)$ is called asymptotically abelian if there exists
a sequence $\{h_n: n\in\bn\}\subset H$ such that  for all $a, b\in\ga$ one has
$$\lim_{n\rightarrow\infty} \|  [\beta_{h_n}(a), b] \|=0\, ,$$
where $[\cdot, \cdot]$ denotes the commutator.

\section{Spreadable states}

A state $\varphi$ on $\mathbb{A}_\a^\bz$ is said to be spreadable if 
$\varphi\circ\a_h=\varphi$, for any strictly increasing map $h:\bz\rightarrow\bz$.\\
We now consider the semigroup $\mathbb{J}_\bz$ of all increasing maps $h$ from $\bz$ to itself whose range is cofinite, that is $|\bz\setminus h(\bz)|<\infty$.
Since any increasing map coincides locally with an element of  $\mathbb{J}_\bz$ (namely the restriction to any finite subset of $\bz$ of $h$ is the same as a suitable element of  $\mathbb{J}_\bz$), a state
is spreadable if and only if it is invariant under the action of $\mathbb{J}_\bz$.\\
Unlike the semigroup of all increasing maps, $\mathbb{J}_\bz$ is countably generated. Precisely,
for every fixed $l\in\bz$, define the $l$-{right hand-side partial shift} as the element $\theta_l$ of $\bj_\bz$ given by
$$
\theta_l(k):=\left\{\begin{array}{ll}
                      k & \text{if}\,\, k<l\,, \\
                      k+1 & \text{if}\,\, k\geq l\,.
                    \end{array}
                    \right.
$$
As shown in \cite{ CFG2},  the set $\{\theta_l, \tau, \tau^{-1}: l\in\bz\}$  generates
$\bj_\bz$ as a unital semigroup, where $\tau$ is given by $\tau(k)=k+1$, $k\in\bz$.\\

Our next goal is to lift the action of $\bj_\bz$ on $\mathbb{A}_\a^\bz$ to an automorphic action
of a group $G$ containing $\bj_\bz$ on the larger $C^*$-algebra given by the non-commutative torus indexed
by $\bz\big[\frac{1}{2}\big]$, the set of dyadic numbers.
The idea is to extend the generators of $\bj_\bz$ to increasing bijections of $\bz\big[\frac{1}{2}\big]$.
We start by extending $\theta_0$ to a bijection $\widetilde{\theta}_0$ of $\bz\big[\frac{1}{2}\big]$, defined by
$$
\widetilde{\theta}_0(d):=\left\{\begin{array}{ll}
                      d & \text{if}\,\, d\leq -1\,, \\
                      2d+1&\text{if}\,\, -1\leq d\leq 0\\
                      d+1 & \text{if}\,\, d\geq 0\,.
                    \end{array}
                    \right.
$$
 For each natural $n$, we then consider the dilation $\delta_n$ by $2^n$ acting on $\bz\big[\frac{1}{2}\big]$, that is
$$\delta_n(d)=2^n d\,, \,\, d\in \bz\bigg[\frac{1}{2}\bigg]\,. $$
We then define  $\widetilde{\theta_n}:=\delta_n^{-1}\circ\widetilde{\theta}_0\circ\delta_n$ and
$\widetilde{\tau}_{k, n}(r)=r+\frac{k}{2^n}\,, \, r\in \bz\big[\frac{1}{2}\big]$, for $n\in\bn$ and $k\in\bz$.\\
Let  $G_n$ be the group generated by
$\widetilde{\theta_k}$ and $\widetilde{\tau}_{1, n}$ with $k=1, \ldots, n$.
$G_n$ is well defined as a subgroup of the group of all bijections of $\bz\big[\frac{1}{2}\big]$. Moreover, by definition $G_n\subset G_{n+1}$ for all $n$. Therefore, the union
$$
G:= \bigcup_{n} G_n\, .
$$
is a group acting  through automorphisms on  $\mathbb{A}_\a^{\bz\left[\frac{1}{2}\right]}$,
the noncommutative torus indexed by ${\bz\left[\frac{1}{2}\right]}$, which is the universal
$C^*$-algebra generated by unitaries $\left\{u_r: r\in{\bz\left[\frac{1}{2}\right]}\right \}$ satisfying
the commutation rules $u_ru_s=e^{i2\pi\a}u_su_r$,  for all $r, s$ in ${\bz\left[\frac{1}{2}\right]}$
with $r<s$. We denote by $\widetilde{{\rm tr}}$ its canonical trace.
Indeed, for every $g\in G$, by universality there exists a $*$-automorphism $\alpha_g$ of $ \mathbb{A}_\a^{\bz\left[\frac{1}{2}\right]} $  uniquely determined by
$$\alpha_g(u_d):=u_{g(d)} \,\,\, d\in\bz\left[\frac{1}{2}\right]\,. $$
Also note that by universality there exists a $*$-automorphism
$\Phi$ on $\mathbb{A}_\a^{\bz\left[\frac{1}{2}\right]}$ acting on the generators by mutiplying by $2$ the corresponding index, that is $\Phi(u_r)= u_{2r}$, for all $r$ in $\bz\left[\frac{1}{2}\right]$. Its inverse $\Phi^{-1}$ is
obviously given by $\Phi^{-1}(u_r)=u_{\frac{r}{2}}$, for all $r$ in $\bz\left[\frac{1}{2}\right]$.\\
For every $n$ in $\bn$, we denote by $\mathbb{A}_\a^{\frac{\bz}{2^n}}\subset \mathbb{A}_\a^{\bz\left[\frac{1}{2}\right]}$ the
$C^*$-subalgebra generated by $\{u_r: r\in \frac{\bz}{2^n}\}$, where $\frac{\bz}{2^n}$ is the set  $\{\frac{k}{2^n}: k\in\bz\}$.\\
We have $\Phi\left( \mathbb{A}_\a^{\frac{\bz}{2^ {n+1} } }\right)=\mathbb{A}_\a^{\frac{\bz}{2^ {n} } }$
and $\Phi^{-1}\left(\mathbb{A}_\a^{\frac{\bz}{2^ {n} } }\right)=\mathbb{A}_\a^{\frac{\bz}{2^ {n+1} } }$.
Furthermore, when $n=0$ the $C^*$ subalgebra $C^*(\{u_l: l\in\bz\})$ is still universal, as proved below.
\begin{prop}\label{copy}
 $C^*(\{u_l: l\in\bz\})\subset\mathbb{A}_\a^{\bz\left[\frac{1}{2}\right]}$ is canonically isomorphic with $\mathbb{A}_\a^\bz$.
\end{prop}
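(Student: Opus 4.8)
The plan is to produce the isomorphism directly from the universal property of $\mathbb{A}_\a^\bz$ and then to establish injectivity by comparing canonical traces. First I would observe that the unitaries $\{u_l: l\in\bz\}\subset\mathbb{A}_\a^{\bz\left[\frac{1}{2}\right]}$ satisfy the relations $u_lu_k=e^{i2\pi\a}u_ku_l$ for integers $l<k$, since these are merely a special case of the dyadic commutation rules. The universal property of $\mathbb{A}_\a^\bz$ then yields a $*$-homomorphism $\pi:\mathbb{A}_\a^\bz\rightarrow\mathbb{A}_\a^{\bz\left[\frac{1}{2}\right]}$ uniquely determined by $\pi(u_l)=u_l$, $l\in\bz$. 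By construction its range is exactly $C^*(\{u_l: l\in\bz\})$, so $\pi$ is a surjection of $\mathbb{A}_\a^\bz$ onto the subalgebra in question. It remains only to prove that $\pi$ is injective.

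For injectivity I would exploit the faithfulness of the canonical trace ${\rm tr}$ of $\mathbb{A}_\a^\bz$. The key claim is that $\widetilde{{\rm tr}}\circ\pi={\rm tr}$, where $\widetilde{{\rm tr}}$ is the canonical trace of $\mathbb{A}_\a^{\bz\left[\frac{1}{2}\right]}$. Both functionals are continuous and the linear span of monomials in the generators is dense, so it suffices to check the identity on a monomial $u_{l_1}^{k_1}\cdots u_{l_n}^{k_n}$ with integer indices. Here $\pi$ sends this monomial to the identically written dyadic monomial, and since both canonical traces arise by averaging over the respective Tikhonov tori (the Haar measure of $\bt^\bz$ being the restriction, along the integer coordinates, of that of $\bt^{\bz\left[\frac{1}{2}\right]}$), both sides equal $\delta_{k_1, 0}\cdots\delta_{k_n, 0}$.

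With the trace identity in hand, injectivity is immediate: if $\pi(a)=0$ then
$$
{\rm tr}(a^*a)=\widetilde{{\rm tr}}\big(\pi(a)^*\pi(a)\big)=0\,,
$$
whence $a=0$ by faithfulness of ${\rm tr}$. Thus $\pi$ is a $*$-isomorphism of $\mathbb{A}_\a^\bz$ onto $C^*(\{u_l: l\in\bz\})$ carrying each $u_l$ to $u_l$, which is precisely the asserted canonical isomorphism.

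I do not anticipate a genuine obstacle here; the argument is a routine faithfulness comparison. The only point that demands care is the verification of the trace compatibility, and in particular making sure that the normal ordering (and merging of repeated indices) of a monomial produces the same Kronecker-delta value in both algebras. This is the case because the defining phase factor $e^{i2\pi\a}$ is identical in the two tori, so reordering a monomial in $\mathbb{A}_\a^\bz$ and reordering its image under $\pi$ accrue exactly the same scalar, leaving the trace value unchanged.
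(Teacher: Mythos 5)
Your argument is correct and follows essentially the same route as the paper's own proof: construct the epimorphism from the universal property, verify that the canonical trace of the big torus pulls back to the canonical trace of $\mathbb{A}_\a^\bz$, and deduce injectivity from faithfulness of ${\rm tr}$. The extra care you take in checking the trace identity on monomials is a welcome elaboration of a step the paper states without detail, but it is not a different method.
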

\begin{proof}
Denote by $w_l$ the generators of $\mathbb{A}_\a^\bz$. By universality of  $\mathbb{A}_\a^\bz$, 
there exists a $*$-epimorphism $p: \mathbb{A}_\a^\bz\rightarrow C^*(\{u_l: l\in\bz\})$ with
$p(w_l)=u_l$, for all $l\in\bz$, hence the equality $\widetilde{{\rm tr}}\circ p={\rm tr}$ holds,
where $\widetilde{{\rm tr}}$ is (the restriction to $C^*(\{u_l: l\in\bz\})$ of) the canonical trace of 
$\mathbb{A}_\a^{\bz\left[\frac{1}{2}\right]}$.\\
Injectivity of $p$ now follows at once. Indeed, if $x$ in $\mathbb{A}_\a^\bz$ is such that $p(x^*x)=0$, then
${\rm tr}(x^*x)=0$, hence $x=0$ by faithfulness of ${\rm tr}$.
\end{proof}
At this point, we are in a position to replicate  the proof of \cite[Proposition 3.1]{CDRCAR} {\it verbatim}, obtaining the following.
\begin{prop}
\label{prop:Ginv_spread}
The map sending any $G$-invariant state on $\mathbb{A}_\a^{\bz\left[\frac{1}{2}\right]}$ to its restriction 
 on $\mathbb{A}_\a^{\bz}$ establishes an affine homeomorphism between the compact convex sets of $G$-invariant states on $\mathbb{A}_\a^{\bz\left[\frac{1}{2}\right]}$ and spreadable states on $\mathbb{A}_\a^{\bz}$. 
\end{prop}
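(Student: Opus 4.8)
The plan is to show that the restriction map $R\colon\Omega\mapsto\Omega|_{\mathbb{A}_\a^{\bz}}$ is a weak-$*$ continuous affine bijection from the $G$-invariant states on $\mathbb{A}_\a^{\bz\left[\frac{1}{2}\right]}$ onto the spreadable states on $\mathbb{A}_\a^{\bz}$, where throughout $\mathbb{A}_\a^{\bz}$ is identified with $C^*(\{u_l:l\in\bz\})$ through Proposition \ref{copy}. Affinity and weak-$*$ continuity of $R$ are immediate, and since both domain and target are weak-$*$ compact convex (hence Hausdorff) sets, a continuous affine bijection between them is automatically an affine homeomorphism; so the real work is to check that $R$ takes values in the spreadable states and is a bijection.

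First I would verify that $R$ lands in the spreadable states. Because $\bj_\bz$ is generated as a unital semigroup by $\theta_0,\tau,\tau^{-1}$ (using $\theta_l=\tau^l\theta_0\tau^{-l}$), it is enough to produce, for each such generator $h$, an element $g\in G$ that agrees with $h$ on $\bz$ and maps $\bz$ into $\bz$: then $\a_g$ restricts on $C^*(\{u_l\})$ to the endomorphism $\a_h$, and $G$-invariance of $\Omega$ gives $\Omega|_{\mathbb{A}_\a^\bz}\circ\a_h=\Omega|_{\mathbb{A}_\a^\bz}$. The shift is covered by $\widetilde{\tau}_{1,0}=\widetilde{\tau}_{1,n}^{\,2^n}\in G$ and its inverse, while the partial shift is covered by $\widetilde{\theta}_0$, whose restriction to $\bz$ is exactly $\theta_0$. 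Hence $\Omega|_{\mathbb{A}_\a^\bz}$ is $\bj_\bz$-invariant, i.e. spreadable.

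For surjectivity I would reconstruct a preimage of a given spreadable state $\om$ by an inductive limit along the scales $\mathbb{A}_\a^{\frac{\bz}{2^n}}$, whose union is dense in $\mathbb{A}_\a^{\bz\left[\frac{1}{2}\right]}$. The automorphism $\Phi$ furnishes isomorphisms $\Phi^n\colon\mathbb{A}_\a^{\frac{\bz}{2^n}}\to\mathbb{A}_\a^{\bz}$, $u_{k/2^n}\mapsto u_k$, so I set $\Omega_n:=\om\circ\Phi^n$. The inclusion $\mathbb{A}_\a^{\frac{\bz}{2^n}}\subset\mathbb{A}_\a^{\frac{\bz}{2^{n+1}}}$ intertwines $\Phi^{n+1}$ with $\a_\psi\circ\Phi^n$ for the strictly increasing map $\psi(k)=2k$, so the compatibility $\Omega_{n+1}|_{\mathbb{A}_\a^{\frac{\bz}{2^n}}}=\Omega_n$ is precisely the spreadability identity $\om\circ\a_\psi=\om$. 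The $\Omega_n$ therefore glue to a functional on the dense union that is positive and normalized, being assembled from states, so that the positivity difficulty usually accompanying such extensions does not arise; it then extends by continuity to a state $\Omega$ with $\Omega|_{\mathbb{A}_\a^\bz}=\om$.

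Injectivity, together with the $G$-invariance of this $\Omega$, both rest on two features of the action: every $g\in G$ acts on the indices through an order-preserving bijection of $\bz\left[\frac{1}{2}\right]$, and $G$ is rich enough to carry any finite increasing tuple of dyadics $r_1<\cdots<r_p$ into $\bz$. For injectivity I choose such a $g$ with $g(r_i)\in\bz$; then $\a_g\big(u_{r_1}^{k_1}\cdots u_{r_p}^{k_p}\big)=u_{g(r_1)}^{k_1}\cdots u_{g(r_p)}^{k_p}\in\mathbb{A}_\a^\bz$ — the commutation rules survive because they only record the order of the indices — so $\Omega\circ\a_g=\Omega$ forces the value of $\Omega$ on every monomial to be read off from $\om$, and two $G$-invariant states with the same restriction coincide. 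For $G$-invariance of the constructed $\Omega$, rescaling $\a_g(x)$ and $x$ to integer indices reduces, by order-preservation, to comparing two integer monomials of the same order type, an equality handed over by spreadability of $\om$. I expect this order-richness of $G$ — that its generators $\widetilde{\theta}_0$ (which opens a dyadic interval via the doubling $d\mapsto 2d+1$), the dilated $\widetilde{\theta}_n$, and the translations $\widetilde{\tau}_{1,n}$ really do suffice to spread any finite dyadic configuration onto integers — to be the main obstacle; it is exactly the combinatorics of the generating set of $\bj_\bz$ recorded in \cite{CFG2}, and once it is in hand the argument is a transcription of \cite[Proposition 3.1]{CDRCAR}.
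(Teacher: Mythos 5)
Your proposal follows essentially the same route as the paper, which simply replicates the proof of \cite[Proposition 3.1]{CDRCAR}: restriction is affine and weak-$*$ continuous between compacta, surjectivity comes from extending a spreadable state along the scales $\mathbb{A}_\a^{\frac{\bz}{2^n}}$ via $\Phi$ together with the spreadability identity $\om\circ\a_\psi=\om$ for $\psi(k)=2k$, and injectivity and the $G$-invariance of the extension rest on the order-transitivity of $G$ on finite dyadic configurations, which you (like the paper) correctly defer to the combinatorics of \cite{CFG2} and \cite{CDRCAR}. One small repair: as the paper defines it, $G_n$ is generated by $\widetilde{\theta}_k$ only for $k\geq 1$, so $\widetilde{\theta}_0$ is not literally available as an element of $G$; however $\widetilde{\theta}_1^{\,2}$ restricts to $\theta_0$ on $\bz$ (and $\widetilde{\tau}_{1,n}^{\,2^n}$ to $\tau$), so your verification that restrictions of $G$-invariant states are $\bj_\bz$-invariant goes through unchanged.
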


Our next goal is to show that the extended dynamical system is $G$-abelian. 
Before doing this, let us fix some notation.  Define the bicharacter $u_\a: \bz\times\bz\rightarrow\bt$ as
$$u_\alpha(k, l):= e^{i2\pi \a kl},\,\,\, k, l\in\bz\, .$$
The isotropy subgroup associated with $u_\alpha$ is the set
$$\Delta_\a:=\{k: u_\alpha(k, k)=1\}\, .$$
Note that if  $\frac{\a}{2\pi}$ is irrational, then $\Delta_\a=\{0\}$.
When $\frac{\a}{2\pi}$ is rational, $\Delta_\a$ is a non-trivial subgroup of $\bz$, as we show below.\\
For every integer $n$,  $\{\frac{n}{2}\}$ stands for $\frac{n}{2}$ is $n$ is even and $\frac{n+1}{2}$ if $n$ is odd.

\begin{lem}\label{subgroup}
If $\frac{\a}{2\pi}=\frac{q_1^{n_1}\ldots q_s^{n_s}}{p_1^{m_1}\ldots p_r^{m_r}}$, 
with $q_1, \ldots, q_s, p_1, \ldots, p_r$ being prime numbers different from each other and $n_1, \ldots, n_s, m_1, \ldots, m_r$ are in $\bn$, then $\Delta_\a=n_0\bz$, where $n_0=p_1^{\{\frac{m_1}{2}\}}\cdots p_r^{\{\frac{m_r}{2}\}}$.\\
Moreover, the restriction of $u_\alpha$ to $\Delta_\a\times \Delta_\a$ is identically $1$.
\end{lem}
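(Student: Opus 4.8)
The plan is to reduce the whole statement to an elementary divisibility computation, exploiting that $u_\a$ is a bicharacter. Throughout, write $\frac{\a}{2\pi}=\frac{a}{b}$ in lowest terms, so that $a=q_1^{n_1}\cdots q_s^{n_s}$, $b=p_1^{m_1}\cdots p_r^{m_r}$ and $\gcd(a,b)=1$. First I would unwind the definition of $\Delta_\a$: an integer $k$ lies in $\Delta_\a$ exactly when $u_\a(k,k)=1$, and a direct computation turns this into the divisibility condition $b\mid k^2$ (the numerator $a$ being irrelevant precisely because $\gcd(a,b)=1$). Thus the task becomes the purely arithmetic identification of $\{k\in\bz:\ b\mid k^2\}$.

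For the first assertion I would argue prime by prime. The condition $b\mid k^2$ is equivalent to $p_j^{m_j}\mid k^2$ for every $j=1,\dots,r$, i.e.\ to the exponent of $p_j$ in $k$ being at least $m_j/2$. Since that exponent is a nonnegative integer, ``at least $m_j/2$'' is the same as ``at least $\{\frac{m_j}{2}\}$'' (this is exactly where the rounding up enters: $k^2$ is a square, so its $p_j$-exponent is even and the bound $m_j/2$ must be rounded up). Collecting the contributions of all the primes $p_j$ yields $k\in\Delta_\a$ if and only if $p_1^{\{\frac{m_1}{2}\}}\cdots p_r^{\{\frac{m_r}{2}\}}=n_0$ divides $k$; conversely, if $n_0\mid k$ then $n_0^2\mid k^2$ and $b\mid n_0^2$ (because $m_j\le 2\{\frac{m_j}{2}\}$ for each $j$), so $b\mid k^2$. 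Hence $\Delta_\a=n_0\bz$.

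The second assertion then follows almost immediately from bimultiplicativity of $u_\a$. Any $k,l\in\Delta_\a=n_0\bz$ can be written $k=n_0k'$, $l=n_0l'$ with $k',l'\in\bz$, so
$$u_\a(k,l)=u_\a(n_0k',n_0l')=u_\a(n_0,n_0)^{k'l'}=1,$$
the last equality holding because $n_0\in n_0\bz=\Delta_\a$ forces $u_\a(n_0,n_0)=1$. (Equivalently, $b\mid n_0^2\mid kl$.)

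None of the steps presents a genuine obstacle; the computation is routine. The only point requiring care is the passage from $m_j/2$ to $\{\frac{m_j}{2}\}$ in the first assertion, which is dictated by the evenness of the $p_j$-adic valuation of the square $k^2$, together with the bookkeeping ensuring that the numerator primes $q_i$ never interfere, guaranteed by $\gcd(a,b)=1$.
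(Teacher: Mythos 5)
Your proposal is correct and follows essentially the same route as the paper: both reduce membership in $\Delta_\a$ to the divisibility condition $p_1^{m_1}\cdots p_r^{m_r}\mid k^2$, deduce $n_0\mid k$ from it, and obtain the second assertion from $p_1^{m_1}\cdots p_r^{m_r}\mid n_0^2\mid kl$. Your prime-by-prime valuation argument is in fact spelled out more completely than the paper's rather terse sketch (which concentrates on closure under addition and leaves the identification $\Delta_\a=n_0\bz$ largely implicit), so there is nothing to correct.
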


\begin{proof}
We start by showing that $\Delta_\a$ is a subgroup of $\bz$.
If $k, l$ in $\Delta_\a$, then $k+l$ in  $\Delta_\a$.
Indeed, by definition, we have that $\frac{q_1^{n_1}\ldots q_s^{n_s}}{p_1^{m_1}\ldots p_r^{m_r}} k^2$ and $\frac{q_1^{n_1}\ldots q_s^{n_s}}{p_1^{m_1}\ldots p_r^{m_r}}l^2$ are integers. For this to happen, we must have $k=k' p_1^{\{\frac{m_1}{2}\}}\cdots p_r^{\{\frac{m_r}{2}\}}$
and $l=l'p_1^{\{\frac{m_1}{2}\}}\cdots p_r^{\{\frac{m_r}{2}\}}$, for some
$k', l'\in\bn$. Therefore, we have $kl=k'l' p_1^{m_1'}\cdots {p_r}^{m_r'} $ divides $n$ as $m_1'\geq m_1, \ldots, m_r'\geq m_r$, and we are done.
\end{proof}

Let $\varphi$ be a $G$-invariant state on $\mathbb{A}_\a^{\bz\left[\frac{1}{2}\right]}$ . Associated with any $g$ in $G$ there is a unitary $T_g$ acting on the Hilbert space
$\ch_\varphi$ of the GNS representation of $\varphi$ as
$$
T_g^\varphi\pi_\varphi(a)\xi_\varphi:=\pi_\varphi(\a_g(a))\xi_\varphi\,, \,\, a\in\mathbb{A}_\a^{\bz\left[\frac{1}{2}\right]} \,.
$$
We denote by $E_\varphi$ the orthogonal projection onto the closed subspace 
$\ch_\varphi^G:=\{\xi\in\ch_\varphi: T_g^\varphi\xi=\xi, \,\textrm{for all}\,\, g\in G \}$. Notice that
 $\bc\xi_\varphi\subset\ch_\varphi^G$, where
$\xi_\varphi$ is the GNS vector of $\varphi$.\\

\begin{lem}
\label{lem:Gabelian}
The $C^*$-dynamical system $\left(\mathbb{A}_\a^{\bz\left[\frac{1}{2}\right]}, G, \alpha\right)$ is
$G$-abelian.
\end{lem}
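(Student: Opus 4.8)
The plan is to check $G$-abelianness straight from the definition: for an arbitrary $G$-invariant state $\varphi$ one must show that the operators $E_\varphi\pi_\varphi(a)E_\varphi$ commute. Since $x\mapsto E_\varphi\pi_\varphi(x)E_\varphi$ is linear and norm continuous and monomials span a dense $*$-subalgebra of $\mathbb{A}_\a^{\bz\left[\frac{1}{2}\right]}$, it suffices to treat monomials $a,b$ and to prove $[E_\varphi\pi_\varphi(a)E_\varphi,E_\varphi\pi_\varphi(b)E_\varphi]=0$. Two ingredients drive the argument. The first is the elementary identity $E_\varphi T^\varphi_g=T^\varphi_gE_\varphi=E_\varphi$, which gives $E_\varphi\pi_\varphi(\a_g(x))E_\varphi=E_\varphi\pi_\varphi(x)E_\varphi$ for every $g\in G$: the compression of a monomial depends only on its $G$-orbit. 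The second is a mean ergodic representation of $E_\varphi$. Here a caveat is needed, because $G$ is generated by partial shifts of Thompson type and I would not assume it amenable, so no F\o lner averaging is available; instead I would invoke the Alaoglu--Birkhoff theorem, which, with no amenability hypothesis, realises $E_\varphi$ as a strong limit of convex combinations of the unitaries $T^\varphi_g$.

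Combining these, I would reduce any product of two compressions to separated single compressions. Testing the commutator on invariant vectors $E_\varphi\pi_\varphi(c)\xi_\varphi$ and replacing each inner $E_\varphi$ by an Alaoglu--Birkhoff average $\sum_i\lambda_i T^\varphi_{g_i}$ turns, after moving the $T^\varphi_{g_i}$ through the representation, every term into a compression of the form $E_\varphi\pi_\varphi\!\big(x\,\a_{g_i}(y)\big)E_\varphi$. At this stage the passage to $\bz\left[\frac{1}{2}\right]$ pays off: its purpose is precisely that $G$ contains elements driving the support of one monomial arbitrarily far from that of the other, and onto dyadic sites interleaving it, so that every term can be brought to a fully separated configuration. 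Once two monomials are fully separated, the commutation rules let one reorder them at the price of the scalar bicharacter $u_\a(\partial x,\partial y)=e^{i2\pi\a\,\partial x\,\partial y}$, and by Lemma \ref{subgroup} this factor is identically $1$ as soon as the degrees lie in $\Delta_\a$. Thus on the $*$-subalgebra generated by monomials of degree in $\Delta_\a$ the displaced copies commute exactly in the limit of large separation; the shift is asymptotically abelian there, and the corresponding compressions commute by the standard implication asymptotic abelianness $\Rightarrow$ $G$-abelianness, \cite[Proposition 3.1.16]{S}.

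The step I expect to be the main obstacle is the treatment of the degrees \emph{outside} $\Delta_\a$, for which the algebra is genuinely not norm-asymptotically abelian: when $\a\,\partial x\,\partial y\notin\bz$ the commutator $[\a_g(x),y]$ keeps the fixed norm $|e^{i2\pi\a\,\partial x\,\partial y}-1|\,\|x\|\,\|y\|$ however far $\a_g(x)$ is pushed, so \cite[Proposition 3.1.16]{S} cannot be applied across the whole algebra and the residual phases must be cancelled by hand. My plan is to average the displaced monomial not merely over far-away translates but over the dyadic placements interleaving the support of its partner: the accumulated factors are then powers of $u_\a(\partial x,\partial x)=e^{i2\pi\a(\partial x)^2}$, a genuine root of unity exactly when $\partial x\notin\Delta_\a$, and choosing the convex weights accordingly forces these contributions to cancel in the Alaoglu--Birkhoff limit. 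What makes this delicate, and is the crux of the whole lemma, is to carry out this cancellation simultaneously in the two orderings and to keep track of the degree $\partial x+\partial y$ carried by the products, so that after the dust settles both orderings reduce to the same compression of a neutral ($\Delta_\a$-degree) expression, which commutes by the asymptotic abelianness established in the previous step. Lemma \ref{subgroup} is used throughout, both for the triviality of $u_\a$ on $\Delta_\a\times\Delta_\a$ and for the identification $\Delta_\a=n_0\bz$.
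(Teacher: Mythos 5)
Your handling of the neutral degrees (those in $\Delta_\a$) matches the paper's second step: asymptotic abelianness of the translates on the subalgebra $\mathcal A$ spanned by elements of degree in $\Delta_\a$, followed by \cite[Proposition 3.1.16]{S}. But for degrees \emph{outside} $\Delta_\a$ you have correctly identified the crux and then left it unproved, and the mechanism you sketch is not the one that works. The paper's decisive idea is sharper: for homogeneous $a$ with $\partial a\in\bz\setminus\Delta_\a$ the compression itself \emph{vanishes}, $E_\varphi\pi_\varphi(a)E_\varphi=0$. This comes from the twisted commutator $[x,y]_{k,l}:=xy-u_\a(k,l)yx$: averaging over translates and using that far-displaced monomials satisfy the twisted relation exactly, one finds that $X:=F_\varphi\pi_\varphi(a)F_\varphi$ (with $F_\varphi$ the projection onto shift-invariant vectors) obeys $XX^*=u_\a(\partial a,-\partial a)\,X^*X$. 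Since $u_\a(\partial a,-\partial a)=e^{-i2\pi\a(\partial a)^2}$ is a non-trivial phase precisely when $\partial a\notin\Delta_\a$, comparing the positive spectra of $XX^*$ and $X^*X$ forces $X=0$, hence $E_\varphi\pi_\varphi(a)E_\varphi=0$ because $E_\varphi\le F_\varphi$. Nothing in your proposal produces this vanishing, and without it the commutation of two compressions carrying degrees outside $\Delta_\a$ remains genuinely open.

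Concretely, your proposed repair has two defects. First, Alaoglu--Birkhoff provides \emph{some} net of convex combinations of the $T^\varphi_g$ converging strongly to $E_\varphi$; it does not let you ``choose the convex weights'' and the group elements so as to engineer a phase cancellation. If you average only over translates and interleaving placements of your own choosing, the limit is a larger projection (such as $F_\varphi$), not $E_\varphi$, and the identity $E_\varphi\pi_\varphi(a)E_\varphi\pi_\varphi(b)E_\varphi=\lim\sum_i\lambda_i E_\varphi\pi_\varphi(a\,\a_{g_i}(b))E_\varphi$ is lost. Second, the phases picked up by interleaving a monomial of degree $\partial y$ among the letters of one of degree $\partial x$ are of the form $e^{i2\pi\a m}$ with $m$ a bilinear sum of the individual exponents depending on the placement, not simply powers of $u_\a(\partial x,\partial x)$, so the claimed reduction of both orderings ``to the same compression of a neutral expression'' is unsubstantiated. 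Finally, even on the neutral part, \cite[Proposition 3.1.16]{S} applied to $\mathcal A$ only yields commutation of the compressions by the projection $P_\varphi$ onto the invariant vectors of $\overline{\pi_\varphi(\mathcal A)\xi_\varphi}$; the identification $P_\varphi=E_\varphi$ is itself a consequence of the vanishing statement above, so the two steps cannot be decoupled in the way your outline suggests.
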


\begin{proof}
We have to show that for any $G$-invariant state $\varphi$ the set $E_\varphi \pi_\varphi(\mathbb{A}_\a^{\bz\left[\frac{1}{2}\right]}) E_\varphi$ is commutative.
The proof will be done in two steps.
We first deal with elements $a$ belonging to some spectral eigenspace with $\partial a\in\bz\setminus\Delta_\a$.\\
We aim to show that  we have 
\begin{equation}\label{oddcomponent}
E_\varphi\pi_\varphi(a)E_\varphi=0,\,\,\, \textrm{for all}\,\, a\,\,{\rm with}\,\, \partial a\in\bz\setminus\Delta_\a\,.
\end{equation} 
To this end, let us
introduce $F_\varphi$, the orthogonal projection onto the closed subspace $\{\xi\in\ch_\varphi: T^\varphi_{g_n}\xi=\xi\,, \textrm{for all}\, n\in\bn\}$. Note that $E_\varphi\leq F_\varphi$.\\
For $k, l\in\bz$, define 
$$ [x, y]_{k, l}:=xy-u_\a (k, l)yx, \, x, y\in \mathbb{A}_\a^{\bz\left[\frac{1}{2}\right]}$$
For any homogeneous $a$ sitting in $(\mathbb{A}_\a^{\bz\left[\frac{1}{2}\right]})^{\partial a}$, we have
\begin{align*}
&[F_\varphi\pi_\varphi(a )F_\varphi, F_\varphi\pi_\varphi(a^*)F_\varphi]_{(\partial a, -\partial a)}\\
&=\lim_{n\rightarrow\infty} \frac{1}{n+1}\sum_{j=0}^n
F_\varphi \pi_\varphi([\a_{\widetilde{\tau}_{j, 0}}(a), a^*]_{\partial a, -\partial a})F_\varphi=0\,, 
\end{align*}
where $\widetilde{\tau}_{j, 0}$ is the monotone map from $\bz\left[\frac{1}{2}\right]$ to itself
given by $\widetilde{\tau}_{j, 0}(r)=r+j$, for all $r$ in $\bz\left[\frac{1}{2}\right]$.
Indeed, the last equality above can be obtained exactly as in the proof of \cite[Lemma 3.3]{CDRCAR}.
In other words, we have proved 
$$F_\varphi\pi_\varphi(a )F_\varphi\pi_\varphi(a^*)F_\varphi=u_\a(k, -k)F_\varphi\pi_\varphi(a^* )F_\varphi\pi_\varphi(a)F_\varphi\, .$$
Now, since $u_\a(k, -k)$ is a non-trivial phase, the spectrum of $F_\varphi\pi_\varphi(a )F_\varphi\pi_\varphi(a^*)F_\varphi$ is $\{0\}$, hence $F_\varphi\pi_\varphi(a )F_\varphi\pi_\varphi(a^*)F_\varphi=0$, and thus $F_\varphi\pi_\varphi(a )F_\varphi=0$. Therefore, from $E_\varphi\leq F_\varphi$ it follows that 
$E_\varphi\pi_\varphi(a) E_\varphi=E_\varphi F_\varphi\pi_\varphi(a) F_\varphi E_\varphi=0$.\\
The second step to take is to show that $[E_\varphi\pi_\varphi(a)E_\varphi,  E_\varphi\pi_\varphi(b)E_\varphi]=0$
for all $a, b$ with $\partial a, \partial b\in \Delta_\a$. 
Since $\Delta_\a$ is a subgroup of $\bz$ by Lemma \ref{subgroup}, 
we have that $\mathcal{A}\coloneq\overline{{\rm span}}\{a\in \mathbb{A}_\a^{\bz\left[\frac{1}{2}\right]}: \partial a\in\Delta_\a \}$
is a $C^*$-subalgebra of our non-commutative torus.
We can then consider the restriction of the dynamics to this subalgebra. We claim that the dynamical
system thus obtained is asymptotically abelian, in that  for any $a, b\in\mathcal{A}$ one has
$$\lim_{n\rightarrow\infty}\|[\a_{\widetilde{\tau}_{n, 0}}(a), b]\|=0\, .$$
where $\widetilde{\tau}_{n, 0}$ is the monotone map defined above.\\
Now, by applying \cite [Proposition 3.1.16]{S} we see that the restricted system is $G$-abelian,
that is $P_\varphi\pi_\varphi(a)P_\varphi \pi_\varphi(b)P_\varphi=P_\varphi\pi_\varphi(b)P_\varphi \pi_\varphi(a)P_\varphi $, for all
$a, b\in \mathcal{A}$, where $P_\varphi\leq E_\varphi$ is the orthogonal projection onto 
$\{\eta\in \overline{\pi_\f(\mathcal{A})\xi_\varphi} : T_g\eta=\eta, g\in G\}$.\\
We next show that  the equality $P_\f=E_\f$ actually holds.
To this end, define 
$$\ch_{\f, +}\coloneq\overline{\pi_\f(\mathcal{A})\xi_\f},\quad\ch_{\f, -}:=\overline{\rm{span}}\{\pi_\f(a)\xi_\f\,,\,\, a: \partial a\in \bz\setminus\Delta_\a\}\,.$$
Now $\ch_\f$ decomposes as $\ch_\f=\ch_{\f, +}\oplus\ch_{\f, -}$.\\
To see this, first note that if $k$ lies in $\Delta_\a$ and $l$ in $\bz\setminus\Delta_\a$, then
$k+l$ continues to lie in $\bz\setminus\Delta_\a$ as readily follows from Lemma \ref{subgroup}.
But then the orthogonality between $\ch_{\f, +}$ and $\ch_{\f, -}$
follows at once  from \eqref{oddcomponent}.\\
Thanks to the first step, it is now easy to verify that the restriction of $E_\f$ to $\ch_{\f, -}$ is zero, {\it i.e.} $E_\f=P_\f$, as maintained.\\
Putting the two steps together, we finally reach the conclusion, for $\mathbb{A}_\a^{\bz\left[\frac{1}{2}\right]}$
decomposes into the direct sum of $\mathcal{A}$ and $\oplus_{m\in \bz\setminus\Delta_\a} (\mathbb{A}_\a^{\bz[\frac{1}{2}]})^m$.\\
To conclude, the claim follows easily by a standard density argument after noting that it is certainly true if
$a, b$ are homogeneous with $\partial a, \partial b$ in $\Delta_\a$.
\end{proof}

We define $\Delta_\a^\perp=\{z\in\bt: z^l=1\,\, \textrm{for all}\,\, l\in\Delta_\a\}$.
Note that ${\Delta_\a^\perp}=\bt$ if $\frac{\a}{2\pi}$ is irrational, whereas ${\Delta_\a^\perp}=\{z\in\bt: z^{n_0}=1\}$
for $\frac{\a}{2\pi}=\frac{q_1^{n_1}\ldots q_s^{n_s}}{p_1^{m_1}\ldots p_r^{m_r}}$ with
$n_0=p_1^{\{\frac{m_1}{2}\}}\cdots p_r^{\{\frac{m_r}{2}\}}$.
Since ${\Delta_\a^\perp}\subset\bt$, the gauge action induces an automorphic action of ${\Delta_\a^\perp}$ on
 $\mathbb{A}_\a^\bz$.

\begin{cor}
\label{cor:sprGinv}
Any spreadable state  on $\mathbb{A}_\a^\bz$ is automatically ${\Delta_\a^\perp}$-invariant.
In particular, if $\frac{\a}{2\pi}$ is irrational, all spreadable states are gauge invariant. 
\end{cor}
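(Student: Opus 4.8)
The plan is to deduce $\Delta_\a^\perp$-invariance from the $G$-abelian structure established in Lemma~\ref{lem:Gabelian}, specifically from the key vanishing identity \eqref{oddcomponent}. First I would pass from a spreadable state $\f$ on $\mathbb{A}_\a^\bz$ to its associated $G$-invariant state on $\mathbb{A}_\a^{\bz[\frac{1}{2}]}$, which exists and is unique by Proposition~\ref{prop:Ginv_spread}; since $\Delta_\a^\perp$-invariance on the subalgebra $\mathbb{A}_\a^\bz$ follows from the corresponding invariance of the extended state, it suffices to work on the larger torus. The gauge action of $z\in\Delta_\a^\perp$ scales each homogeneous element $a$ of degree $m$ by $z^m$, so $\f$ is $\Delta_\a^\perp$-invariant precisely when $\f(a)=z^m\f(a)$ for every $z\in\Delta_\a^\perp$ and every homogeneous $a$ with $\partial a=m$.

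The decisive observation is that $z^m=1$ automatically when $m\in\Delta_\a$, by the very definition $\Delta_\a^\perp=\{z\in\bt: z^l=1\ \forall l\in\Delta_\a\}$, so the invariance condition is vacuous on eigenspaces whose degree lies in $\Delta_\a$. The only potential obstruction comes from homogeneous elements with $\partial a\in\bz\setminus\Delta_\a$, and precisely for these \eqref{oddcomponent} gives $E_\varphi\pi_\varphi(a)E_\varphi=0$. I would use this as follows: since $\xi_\varphi\in\ch_\varphi^G$ we have $E_\varphi\xi_\varphi=\xi_\varphi$, whence for $\partial a\in\bz\setminus\Delta_\a$,
\begin{equation*}
\f(a)=\langle \xi_\varphi, \pi_\varphi(a)\xi_\varphi\rangle=\langle E_\varphi\xi_\varphi, \pi_\varphi(a)E_\varphi\xi_\varphi\rangle=\langle \xi_\varphi, E_\varphi\pi_\varphi(a)E_\varphi\,\xi_\varphi\rangle=0\,.
\end{equation*}
Thus $\f$ annihilates every homogeneous element of degree outside $\Delta_\a$, while on degrees inside $\Delta_\a$ the gauge phase $z^m$ is trivial. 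Combining these two cases shows $\f(\gamma_z(a))=z^{\partial a}\f(a)=\f(a)$ for all homogeneous $a$ and all $z\in\Delta_\a^\perp$, and a density argument over the dense span of eigenspaces extends this to all of $\mathbb{A}_\a^\bz$, giving $\f\circ\gamma_z=\f$.

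For the ``in particular'' clause, when $\frac{\a}{2\pi}$ is irrational we have $\Delta_\a=\{0\}$, so $\Delta_\a^\perp=\bt$ and $\Delta_\a^\perp$-invariance is exactly gauge invariance; this is immediate once the general statement is in hand. I do not anticipate a serious obstacle here, since all the analytic work is already contained in Lemma~\ref{lem:Gabelian}; the corollary is essentially a bookkeeping argument that reads off the values of $\f$ from the spectral decomposition and the orthogonal projection $E_\varphi$. The one point requiring a little care is the transition from the extended torus back to $\mathbb{A}_\a^\bz$ and the verification that the gauge action of $\Delta_\a^\perp$ on $\mathbb{A}_\a^\bz$ is genuinely the restriction of an action for which \eqref{oddcomponent} applies, but this is controlled by Proposition~\ref{copy} together with the compatibility of the eigenspace gradings under the inclusion $C^*(\{u_l:l\in\bz\})\subset\mathbb{A}_\a^{\bz[\frac{1}{2}]}$.
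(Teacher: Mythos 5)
Your proposal is correct and follows essentially the same route as the paper: reduce to a $G$-invariant state on $\mathbb{A}_\a^{\bz[\frac{1}{2}]}$ via Proposition~\ref{prop:Ginv_spread}, note that the gauge phase $z^{\partial a}$ is trivial for $\partial a\in\Delta_\a$, and kill the remaining degrees using \eqref{oddcomponent} together with $E_\varphi\xi_\varphi=\xi_\varphi$, finishing by density of the span of the eigenspaces. The only difference is that you spell out the computation $\f(a)=\langle \xi_\varphi, E_\varphi\pi_\varphi(a)E_\varphi\xi_\varphi\rangle=0$, which the paper leaves implicit.
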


\begin{proof}
Thanks to Proposition \ref{prop:Ginv_spread}, we may as well prove the statement for
$G$-invariant states on $\mathbb{A}_\a^{\bz\left[\frac{1}{2}\right]}$.
Let $\varphi$ be any such state; we need to show that $\varphi\circ\gamma_z=\varphi$ for
all $z$ in ${\Delta_\a^\perp}$. Since the set of all homogeneous elements spans a dense subalgebra, it suffices to
verify the equality only on homogeneous elements
For any such $a$, we have
$\varphi\circ\gamma_z\, (a)=\varphi(\gamma_z(a))= \varphi (z^{\partial a} a)=z^{\partial a}\varphi (a)$.
Now, if $\partial a$ is in $\Delta_\a$, then $z^{\partial a}=1$ by definition of ${\Delta_\a^\perp}$, and the equality is
certianly satisfied. If $\partial a$ is in $\bz\setminus\Delta_\a$, then from 
\eqref{oddcomponent}, that is $E_\varphi\pi_\varphi(a)E_\varphi=0$, it immediately follows that both
$\varphi(a)$ and $\varphi(\gamma_z(a))$ vanish. The proof is thus complete.
\end{proof}
Let us  now consider $S\subset G$  the unital semigroup generated by
$\widetilde{\theta_n}$ and $\widetilde{\tau}_{k, n}$, with $n\in\bz$ and $k\in\bn$. 
Note that $h(r)\geq r$, for all $r$ in $\bz\big[\frac{1}{2}\big]$, if $h$ sits in $S$.\\
For  every $G$-invariant state $\varphi$ on $ \mathbb{A}_\a^{\bz\left[\frac{1}{2}\right]}$, define
$\ch_\varphi^S:=\{\xi\in\ch_\varphi: T_h^\varphi\xi=\xi,\, h\in S\}$. Clearly
$\ch_\varphi^G$ is a subspace of $\ch_\varphi^S$. However, since the group generated by $S$ in $G$ is $G$ itself, it is easy to see that the equality
$\ch_\varphi^S=\ch_\varphi^G$ holds, for any $G$-invariant state, {\it cf.} \cite[Lemma 3.2]{CDRCAR}.

\begin{lem}\label{factorisation}
Any extreme spreadable state $\varphi$ on $\mathbb{A}_\a^\bz$ factorizes as
$$\varphi (u_{j_1}^{k_1}\cdots u_{j_l}^{k_l})=\om(u_{j_1}^{k_1})\cdots\om(u_{j_l}^{k_l})$$
for all $j_1\neq\cdots \neq j_l, k_1, \cdots, k_l$ in $\bz$, for a suitable state
$\om$ on $C(\bt)$.
\end{lem}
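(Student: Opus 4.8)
The plan is to transfer the problem to the extended torus via Proposition \ref{prop:Ginv_spread}, extract a product structure from extremality through a mean ergodic argument over the forward semigroup $S$, and then remove the non-commutativity of the indices with Lemma \ref{subgroup}. First I would fix an extreme spreadable $\varphi$ and, since an affine homeomorphism carries extreme points to extreme points, replace it by the extreme $G$-invariant state on $\mathbb{A}_\a^{\bz[\frac12]}$ restricting to it, still denoted $\varphi$, viewing $\mathbb{A}_\a^\bz$ inside $\mathbb{A}_\a^{\bz[\frac12]}$ through Proposition \ref{copy}. Because the system is $G$-abelian (Lemma \ref{lem:Gabelian}), the general theory of $G$-abelian systems (\emph{cf.} \cite{S}) shows that extremality amounts to $E_\varphi\pi_\varphi(\mathbb{A}_\a^{\bz[\frac12]})E_\varphi=\bc E_\varphi$, which, $\xi_\varphi$ being cyclic, forces $\ch_\varphi^G=\ch_\varphi^S=\bc\xi_\varphi$, i.e.\ $E_\varphi$ is the rank-one projection onto $\bc\xi_\varphi$. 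I then set $\omega:=\varphi|_{C^*(u_0)}$, a state on $C(\bt)\cong C^*(u_0)$. Two facts are recorded for later: integer translations lie in $G$, so $\varphi(u_j^k)=\varphi(u_0^k)=\omega(u_j^k)$ for every $j$; and, using $\xi_\varphi=E_\varphi\xi_\varphi$ together with \eqref{oddcomponent}, $\omega(u_0^k)=\langle\xi_\varphi,E_\varphi\pi_\varphi(u_0^k)E_\varphi\xi_\varphi\rangle=0$ whenever $k\in\bz\setminus\Delta_\a$.

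The core step is the one-step factorization
\[
\varphi(a\,u_{j_l}^{k_l})=\varphi(a)\,\omega(u_{j_l}^{k_l}),
\]
valid whenever $a$ is a monomial supported on indices strictly below $j_l$. To prove it I would argue as follows. Put $\zeta:=\pi_\varphi(u_{j_l}^{k_l})\xi_\varphi$ and let $K$ be the norm-closed convex hull of $\{T_g^\varphi\zeta:g\in S\}$. Since $sS\subseteq S$, each $T_s^\varphi$ maps $K$ isometrically into itself, so the unique minimal-norm element $\zeta_0\in K$ is fixed by all $T_s^\varphi$, whence $\zeta_0\in\ch_\varphi^S=\bc\xi_\varphi$; as $\langle\xi_\varphi,T_g^\varphi\zeta\rangle=\varphi(\a_g(u_{j_l}^{k_l}))=\omega(u_{j_l}^{k_l})$ is constant on the generating set, it stays constant on $K$, forcing $\zeta_0=\omega(u_{j_l}^{k_l})\xi_\varphi$. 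Choosing convex combinations $\sum_i c_i\,T_{g_i}^\varphi\zeta\to\zeta_0$ with $g_i\in S$ and pairing with $\pi_\varphi(a^*)\xi_\varphi$ gives $\sum_i c_i\,\varphi(a\,\a_{g_i}(u_{j_l}^{k_l}))\to\varphi(a)\,\omega(u_{j_l}^{k_l})$. Here spreadability is decisive: every $g\in S$ satisfies $g(j_l)\ge j_l$, so $g(j_l)$ exceeds all indices of $a$, and the increasing map fixing those indices and sending $j_l\mapsto g(j_l)$ yields $\varphi(a\,\a_g(u_{j_l}^{k_l}))=\varphi(a\,u_{j_l}^{k_l})$. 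Every term thus equals $\varphi(a\,u_{j_l}^{k_l})$, and the displayed identity follows.

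Iterating this identity on the rightmost factor gives, by induction on $l$, the factorization $\varphi(u_{j_1}^{k_1}\cdots u_{j_l}^{k_l})=\prod_i\omega(u_{j_i}^{k_i})$ for every strictly increasing string $j_1<\cdots<j_l$ and arbitrary exponents. For a general arrangement $j_1\neq\cdots\neq j_l$, I would reorder the monomial into increasing index order; by the commutation relations this multiplies it only by a phase $\prod u_\a(k_a,k_b)^{\pm1}$. If all $k_i\in\Delta_\a$, Lemma \ref{subgroup} makes each such phase trivial, so the value is unchanged and equals $\prod_i\omega(u_{j_i}^{k_i})$. If some $k_{i_0}\notin\Delta_\a$, then $\omega(u_{j_{i_0}}^{k_{i_0}})=0$ by the first paragraph, so the increasing, hence also the original, monomial has $\varphi$-value $0=\prod_i\omega(u_{j_i}^{k_i})$. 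In either case the product formula holds.

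The main obstacle is the second paragraph: converting extremality into an \emph{exact} product formula rather than an asymptotic one. The mean ergodic theorem only realizes $\omega(u_{j_l}^{k_l})\xi_\varphi$ as a norm limit of averages of forward translates, and it is essential that these translates be chosen inside $S$, since only the forward maps ($g(r)\ge r$) allow spreadability to collapse each correlation $\varphi(a\,\a_g(u_{j_l}^{k_l}))$ to the single value $\varphi(a\,u_{j_l}^{k_l})$; the identification $\ch_\varphi^S=\ch_\varphi^G$ is precisely what keeps the limit inside $\bc\xi_\varphi$. Checking that the minimal-norm vector is $S$-fixed and computing its $\xi_\varphi$-component are the delicate points of the argument.
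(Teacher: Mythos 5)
Your proof is correct and follows essentially the same route as the paper's: transfer to an extreme $G$-invariant state on $\mathbb{A}_\a^{\bz[\frac{1}{2}]}$, use $G$-abelianness to make $E_\varphi$ rank-one, approximate it by convex combinations of $\{T^\varphi_h : h\in S\}$ (you inline the minimal-norm-element proof of the mean ergodic theorem where the paper cites \cite[Proposition 4.3.4]{BR1}), and exploit $h(r)\ge r$ for $h\in S$ plus spreadability to freeze the correlation and induct on the rightmost factor. Your final paragraph reducing arbitrary index arrangements to increasing ones — via Lemma \ref{subgroup} when all degrees lie in $\Delta_\a$ and via $\om(u^k)=0$ from \eqref{oddcomponent} otherwise — is a welcome explicit treatment of a step the paper leaves implicit.
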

\begin{proof}
Let $\varphi$ be an extreme $G$-invariant state on $\mathbb{A}_\a^{\bz\left[\frac{1}{2}\right]}$. By Lemma \ref{lem:Gabelian}, and \cite[Proposition 3.1.12]{S} $E_{\varphi}^{S}$ is the rank-one projection onto $\bc\xi_\varphi$.\\
Consider now the family of unitaries
$\{T_h^\varphi: h\in S\}\subseteq\cb(\ch_\varphi)$. By \cite[Proposition 4.3.4.]{BR1}, there exists a net  $\{S_\gamma: \gamma\in J\}$, whose terms are finite convex combinations of the form $S_\gamma=\sum_{k=1}^{n_\gamma} \lambda_{k}^\gamma  T_{h_k^\gamma}^\varphi$, for some $\lambda_k^\gamma\geq 0$ with $\sum_{k=1}^{n_\gamma} \lambda_k^\gamma=1$, which is strongly convergent to  $E_\varphi^S$. \\
Fix $a$,$b\in  \mathbb{A}_\a^{\bz\left[\frac{1}{2}\right]}$. For each $\g\in J$, define $b_\gamma:=\sum_{k=1}^{n_\gamma} \lambda_{k}^\gamma  \a_{h_k^\gamma}(b)$. We have
\begin{align}
\begin{split}
\label{eq:b_gamma}
\lim_\g \varphi(ab_\g)&= \lim_\g \langle \pi_\varphi(a b_\g) \xi_\varphi, \xi_\varphi \rangle\\
&= \lim_\g  \langle  \pi_\varphi(a)  \sum_{k=1}^{n_\g}\lambda_{k}^\g T_{h_k^\g}^\varphi \pi_\varphi(b)\xi_\varphi, \xi_\varphi\rangle
\\
&=\langle  \pi_\varphi(a)   E_{\bc\xi_\varphi}\pi_\varphi(b)\xi_\varphi, \xi_\varphi\rangle=
\varphi(a)\varphi(b)\,.
\end{split} .
\end{align} 
 For any $l$, we denote by $\iota_l: C(\bt)\rightarrow C^*(u_l)$ the continuous functional calculus of $u_l$, that is $\iota_l(f):=f(u_l)$, for all
$f$ in $C(\bt)$.
 Let now $\om$ be the state on $C(\bt)$ defined by $\om(f):=\varphi(\iota_l(f))$, $f\in C(\bt)$. The spreading-invariance of $\varphi$ ensures that $\om$ is well defined since its definition does not depend on $l$. 
Our goal is to show that $\varphi$ factorizes as
$$\varphi(\iota_{j_1}(f_1)\iota_{j_2}(f_2)\cdots \iota_{j_n}(f_n))=\om(f_1(u_{j_1}))\om(f_2(u_{j_2}))\cdots\om(f_n(u_{j_n}))$$
for every $n\in\bn$, for every $j_1\neq \ldots \neq  j_n\in\bz$, and for every $f_1, f_2, \ldots, f_n$ in $C(\bt)$. We proceed by induction on $n$. For $n=1$, the equality is true. Suppose now that the assertion holds with $n$ and we will prove it holds with $n+1$. \\
Let $n\in\bn$, $j_1<j_2<\ldots<j_{n+1}$ in $\bz$, and $f_1, \cdots, f_n, g\in C(\bt)$. Since $\varphi$ is $G$-invariant on $\mathbb{A}_\a^{\bz\left[\frac{1}{2}\right]}$, we have
$$
\varphi(\iota_{j_1}(f_1)\cdots \iota_{j_n}(f_n)\iota_{j_{n+1}}(g))=\varphi(\iota_{j_1}(f_1)\ldots \iota_{j_n}(f_n) \iota_{h_k^\g(j_{n+1})}(g))\,,
$$
where, for each $\g$ in $J$, the $h_k^\g$'s are the monotone functions in $S$ introduced in the definition of the net $S_\g$.  
Therefore, after summing on all $k$'s between $1$ and $n_\g$, we find
\begin{align*}
&\varphi(\iota_{j_1}(f_1)\cdots \iota_{ j_n}(f_n)\iota_{j_{n+1}}(g))\\
=&\sum_{k=1}^{n_\g}\lambda_k^\g\varphi(\iota_{j_1}(f_1)\cdots \iota_{j_n}(f_n) \iota_{h_k^\g(j_{n+1})}(g))\\=&\varphi(\iota_{j_1}(f_1)\cdots \iota_{j_n}(f_n)b_\g)
\end{align*}
\medskip
where $b_\g:=\sum_{k=1}^{n_\g}\lambda_k^\g \iota_{h_k^\g(j_{n+1})}(g)=\sum_{k=1}^{n_\g} \lambda_{k}^\g \a_{h_k^\g}(\iota_{j_{n+1}}(g))$.\\
Applying \eqref{eq:b_gamma} to $b=\iota_{j_n+1}(g)$ we obtain
$$
\lim_\g\varphi(\iota_{j_1}(f_1)\cdots \iota_{j_n}(f_n)b_\g)=\varphi(\iota_{j_1}(f_1)\cdots \iota_{j_n}(f_n))\om(g)\,.
$$
Finally, by the inductive hypothesis, it follows that 
$$
\varphi(\iota_{j_1}(f_1)\cdots \iota_{j_n}(f_n))\om(g)=\om(f_1)\cdots\om(f_n)\om(g)\,,
$$
and we are done.
\end{proof}
The next result provides a supply of product states on the rational non-commutative torus.
\begin{lem}\label{existencepro}
Let $\frac{\a}{2\pi}=\frac{q_1^{n_1}\ldots q_s^{n_s}}{p_1^{m_1}\ldots p_r^{m_r}}$ be a rational number and 
$n_0:=p_1^{\{\frac{m_1}{2}\}}\cdots p_r^{\{\frac{m_r}{2}\}}$.\\ 
For any state $\om$ on $C(\bt)$ with
$\om(z^l)=0$ if $l$ is not a multiple of $n_0$, there exists a state $\times\om$ on $\mathbb{A}_\a^\bz$ uniquely determined by
$$\times\om\, (u_{i_1}^{l_1} \cdots u_{i_l}^{k_l})=\om(u_{i_1}^{l_1})\cdots\om (u_{i_l}^{k_l})$$
for all $i_1\neq\ldots\neq i_l$ in $\bz$ and all $k_1, \ldots, k_l$ in $\bz$.
\end{lem}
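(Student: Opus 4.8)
The plan is to construct $\times\om$ as a classical infinite product state living on a suitable abelian subalgebra, pulled back to all of $\mathbb{A}_\a^\bz$ through a gauge-averaging conditional expectation; the hypothesis on $\om$ is exactly what makes the two pictures agree. Throughout, let $\cp\subset\mathbb{A}_\a^\bz$ be the dense $*$-subalgebra linearly spanned by the ordered monomials $u_{j_1}^{m_1}\cdots u_{j_t}^{m_t}$ with $j_1<\cdots<j_t$, which form a linear basis of $\cp$.

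First I would single out the abelian subalgebra $B:=C^*(\{u_l^{n_0}: l\in\bz\})$. For $l<k$ one has $u_l^{n_0}u_k^{n_0}=u_\a(n_0,n_0)\,u_k^{n_0}u_l^{n_0}$, and since $n_0\in\Delta_\a$ by Lemma \ref{subgroup}, the phase $u_\a(n_0,n_0)$ equals $1$; hence the generators $u_l^{n_0}$ commute pairwise and $B$ is abelian. I would then identify $B$ with $C(\bt^\bz)$ exactly as in Proposition \ref{copy}: the universal property of $C(\bt^\bz)$ provides a surjection $z_l\mapsto u_l^{n_0}$, and since the canonical trace restricts on $B$ to the (faithful) Haar state of $C(\bt^\bz)$, faithfulness of ${\rm tr}$ forces this surjection to be injective.

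Next I would build the conditional expectation onto $B$. Let $K:=\{\mathfrak z\in\bt^\bz: z_l^{n_0}=1\ \text{for all }l\}$, a compact group that acts on $\mathbb{A}_\a^\bz$ by the (strongly continuous) gauge automorphisms $\gamma_{\mathfrak z}$ of \eqref{nondiagonalgauge}. Averaging against the Haar measure of $K$ yields a unital completely positive projection $P(x):=\int_K\gamma_{\mathfrak z}(x)\,d\mathfrak z$ onto the fixed-point algebra, whose action on an ordered monomial is $P(u_{j_1}^{m_1}\cdots u_{j_t}^{m_t})=u_{j_1}^{m_1}\cdots u_{j_t}^{m_t}$ when every $m_i$ is a multiple of $n_0$ and $0$ otherwise; thus $P$ is a conditional expectation of $\mathbb{A}_\a^\bz$ onto $B$. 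On the classical side, let $\om'$ be the state on $C(\bt)$ defined by $\om'(f):=\om(f(z^{n_0}))$, so that $\om'(z^k)=\om(z^{n_0k})$, and let $(\om')^{\otimes\bz}$ be the corresponding infinite product state on $B\cong C(\bt^\bz)$. I would then set $\times\om:=(\om')^{\otimes\bz}\circ P$, which is a state, being the composition of a state with a unital completely positive map.

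It remains to check the product formula and uniqueness. Given pairwise distinct $i_1,\dots,i_l$ and exponents $k_1,\dots,k_l$, reordering $u_{i_1}^{k_1}\cdots u_{i_l}^{k_l}$ into increasing index order produces an ordered monomial multiplied by a product of phases of the form $u_\a(k_a,k_b)$. If some $k_j$ fails to be a multiple of $n_0$, then $P$ annihilates the ordered monomial and simultaneously $\om(z^{k_j})=0$, so both sides of the asserted identity vanish; if instead every $k_j$ lies in $\Delta_\a=n_0\bz$, then all the reordering phases equal $1$ by the second assertion of Lemma \ref{subgroup}, and one computes $\times\om(u_{i_1}^{k_1}\cdots u_{i_l}^{k_l})=\prod_j\om'(z^{k_j/n_0})=\prod_j\om(z^{k_j})$. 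Since $\cp$ is dense and the above fixes $\times\om$ on all monomials, $\times\om$ is the unique state with the stated values. The one genuinely delicate point is the reduction to the abelian picture: one must be sure that the noncommutative twist becomes invisible precisely on the part of $\mathbb{A}_\a^\bz$ that the state does not kill, and this is guaranteed by the triviality of $u_\a$ on $\Delta_\a\times\Delta_\a$; granting this, existence of the product measure is classical and the construction of $P$ is a routine averaging over a compact group.
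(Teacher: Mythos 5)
Your construction is correct and is essentially the same as the paper's: you average the gauge action of $(\Delta_\a^\perp)^\bz$ to get a conditional expectation onto the abelian fixed-point subalgebra generated by the $u_l^{n_0}$, identify that subalgebra with $C(\bt^\bz)$ via faithfulness of the trace as in Proposition~\ref{copy}, and compose a classical infinite product state with the expectation. Your explicit verification of the product formula (using that $u_\a$ is trivial on $\Delta_\a\times\Delta_\a$) and of uniqueness only spells out what the paper leaves implicit.
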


\begin{proof}
Let ${\Delta_\a^\perp}\subset\bt$ the subgroup of the $n_0$-th roots of unity, {\it i.e.} $\{ z\in\bt: z^{n_0}=1\}$.
The infinite product $(\Delta_\a^\perp)^{\bz}\subset\bt^\bz$ is a compact group, and we denote by
$\mu$ its Haar measure. Averaging the action of $({\Delta_\a^\perp})^\bz$ on $\mathbb{A}_\a^\bz$,
we get a (faithful) conditional expectation  
$$F(a):=\int_{{(\Delta_\a^\perp)}^{\bz}} \gamma_\mathfrak{z}(a) {\rm d}\mu(\mathfrak{z})\,, a\in\mathcal{A}_\a^\bz$$
onto the fixed-point subalgebra, which is easily seen to be given by 
$$\overline{{\rm span}}\{u_{i_1}^{n_0k_1}\cdots u_{i_j}^{n_0k_j}: i_1, \ldots, i_j, k_1, \ldots, k_j\in\bz\}\, .$$
But thanks to the commutation rules of the infinite torus, one immediately sees that
all monomials above commute with one another. Therefore, the fixed-point subalgebra
identifies with the (maximal) infinite tensor product $\otimes_{i\in\bz} C(u_i^{n_0})$ by a minor variation of  the proof
of Proposition \ref{copy}.
Because the Haar measure $\mu$ is the product of the normalized counting measure of ${\Delta_\a^\perp}$, one sees at once that $F$ is multiplicative on the product of two elements lying in two different factors of the tensor product, {\rm i.e.} $F(u_{i}^{k_i}u_j^{k_j})=F(u_{i}^{k_i})F(u_j^{k_j})$, if $i\neq j$  and for all $k_i, k_j$ in $\bz$.
Now any state $\om$ on $C(\bt)$ with the property in the statement writes
as $\om=\om'\circ F_0$, where $F_0$ is the conditional expectation 
obtained by averaging the action of ${\Delta_\a^\perp}$ on $C(\bt)$, for a unique state $\om'$ on the range of $F_0$.\\
We can then consider the infinite product $\otimes^\bz \om'$, as a state of the fixed-point subalgebra.
The composition $\otimes^\bz\om'\,\circ F$ will then yield the sought product $\times\om$.
\end{proof}
We are ready to state the main result of the paper.
\begin{thm}\label{spreadability}
The set of all spreadable states on $\mathbb{A}_\a^\bz$ is the Bauer simplex whose extreme points are
the product states $\times\om$, where $\om$ is a $\Delta_\a^\perp$-invariant state on $C(\bt)$.\\
In particular, we have:
\begin{itemize}
\item[(i)] If $\frac{\a}{2\pi}$ is irrational, the trace is the only spreadable state;
\item[(ii)] If  $\frac{\a}{2\pi}=\frac{q_1^{n_1}\ldots q_s^{n_s}}{p_1^{m_1}\ldots p_r^{m_r}}$ is rational, 
$\om$ is any state on $C(\bt)$ invariant under all rotations by $n_0$-th roots of unity, where
$n_0=p_1^{\{\frac{m_1}{2}\}}\cdots p_r^{\{\frac{m_r}{2}\}}$.
\end{itemize}
\end{thm}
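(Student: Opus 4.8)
The plan is to push the whole question to the $G$-invariant states on $\mathbb{A}_\a^{\bz\left[\frac{1}{2}\right]}$ through Proposition \ref{prop:Ginv_spread}, and then to combine $G$-abelianness (Lemma \ref{lem:Gabelian}) with the factorization and existence results, Lemmas \ref{factorisation} and \ref{existencepro}. Write $S$ for the compact convex set of spreadable states on $\mathbb{A}_\a^\bz$; since $\mathbb{A}_\a^\bz$ is separable, $S$ is metrizable. By Proposition \ref{prop:Ginv_spread} it is affinely homeomorphic to the set of $G$-invariant states on $\mathbb{A}_\a^{\bz\left[\frac{1}{2}\right]}$, and the latter form a Choquet simplex by the general theory of $G$-abelian systems (see \cite{S}). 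This already delivers the simplex half of the statement; what remains is to pin down $\partial_e S$ and to show it is closed.

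Let $K$ be the weak$^*$-compact convex set of $\Delta_\a^\perp$-invariant states on $C(\bt)$, and let $\Psi\colon K\to S$ be the assignment $\om\mapsto\times\om$ of Lemma \ref{existencepro}; this is well posed because $\Delta_\a^\perp$-invariance of $\om$ is exactly the vanishing condition $\om(z^l)=0$ for $l\notin\Delta_\a$, and each $\times\om$ is manifestly spreadable. One inclusion is immediate: by Lemma \ref{factorisation} every extreme spreadable state equals $\times\om$ for some state $\om$ on $C(\bt)$, and by Corollary \ref{cor:sprGinv} it is $\Delta_\a^\perp$-invariant, forcing $\om\in K$; hence $\partial_e S\subseteq\Psi(K)$. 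Moreover $\Psi$ is injective (restricting $\times\om$ to a single generator recovers $\om$) and weak$^*$-continuous (on each monomial $\times\om$ is a product of the numbers $\om(z^{k})$, and monomials span a dense subspace), so $\Psi$ is a homeomorphism of the compact set $K$ onto $\Psi(K)$.

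The heart of the argument, and the step I expect to be the main obstacle, is the reverse inclusion $\Psi(K)\subseteq\partial_e S$: every product state $\times\om$ with $\om\in K$ must be extreme. I would deduce this straight from the simplex structure. Fix $\om\in K$ and let $\times\om=\int_{\partial_e S}\psi\,d\nu(\psi)$ be its unique Choquet decomposition; by the inclusion just proved, $\nu$-almost every $\psi$ is of the form $\times\om_\psi$ with $\om_\psi\in K$. Testing against a single generator gives $\om(z^k)=\int\om_\psi(z^k)\,d\nu(\psi)$, while testing against $u_i^k u_j^{-k}$ with $i\neq j$ and using factorization gives $|\om(z^k)|^2=\int|\om_\psi(z^k)|^2\,d\nu(\psi)$. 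The equality case in the Cauchy--Schwarz inequality then forces $\om_\psi(z^k)$ to be $\nu$-a.e.\ equal to $\om(z^k)$, for every $k\in\bz$; since the characters $z^k$ separate states on $C(\bt)$, we get $\om_\psi=\om$ for $\nu$-a.e.\ $\psi$, so $\nu$ is a point mass and $\times\om$ is extreme. (Alternatively, one checks that $\times\om$ is strongly clustering along the shift $\tau$, whence $E_{\times\om}$ is one-dimensional and extremality follows from $G$-abelianness; the clustering computation is transparent because $\times\om$ is nonzero only on monomials lying in the abelian fixed-point algebra of Lemma \ref{existencepro}, where no commutation phases intervene.)

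Combining the two inclusions yields $\partial_e S=\Psi(K)$, which is compact, hence closed; a metrizable Choquet simplex with closed extreme boundary is by definition a Bauer simplex, which is the main assertion. The two special cases then follow from Lemma \ref{subgroup}: if $\frac{\a}{2\pi}$ is irrational then $\Delta_\a=\{0\}$, so $\Delta_\a^\perp=\bt$ and the only rotation-invariant state on $C(\bt)$ is normalized Haar measure, whose product state is exactly the canonical trace, giving (i); if $\frac{\a}{2\pi}=\frac{q_1^{n_1}\ldots q_s^{n_s}}{p_1^{m_1}\ldots p_r^{m_r}}$ is rational then $\Delta_\a^\perp$ is the group of $n_0$-th roots of unity with $n_0=p_1^{\{\frac{m_1}{2}\}}\cdots p_r^{\{\frac{m_r}{2}\}}$, giving (ii).
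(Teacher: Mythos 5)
Your argument is correct and follows the same skeleton as the paper's proof: reduce to $G$-invariant states via Proposition \ref{prop:Ginv_spread}, get the simplex structure from $G$-abelianness (Lemma \ref{lem:Gabelian}), show every extreme spreadable state is a product $\times\om$ with $\om$ a $\Delta_\a^\perp$-invariant state on $C(\bt)$ (Lemma \ref{factorisation} plus Corollary \ref{cor:sprGinv}), invoke Lemma \ref{existencepro} for existence, and get closedness of the boundary from continuity and injectivity of $\om\mapsto\times\om$. The one place where you genuinely go beyond the paper is the reverse inclusion: the paper's proof only establishes that extreme states are products and that products exist, and never argues that \emph{every} $\times\om$ with $\om$ $\Delta_\a^\perp$-invariant is extreme, which is needed for the claimed description of the extreme boundary (and for its compactness, since the continuous injective image of $K$ is only useful if it coincides with $\partial_e S$). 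Your Cauchy--Schwarz argument closes this gap cleanly: writing $\times\om=\int\psi\,{\rm d}\nu(\psi)$ with $\nu$ supported on $\partial_e S$, testing on $u_i^k$ and on $u_i^ku_j^{-k}$ gives $\bigl|\int\om_\psi(z^k)\,{\rm d}\nu\bigr|^2=\int|\om_\psi(z^k)|^2\,{\rm d}\nu$, whence $\om_\psi=\om$ a.e.\ and $\nu$ is a point mass. Your alternative route via strong clustering of $\times\om$ under the shift is also valid and is in fact the mechanism the paper itself uses later (in the proof of Theorem \ref{Poulsen} and in the final proposition on faces), so either version is consistent with the paper's toolkit; the Cauchy--Schwarz version has the small advantage of staying entirely inside the simplex of spreadable states rather than passing through the larger set of shift-invariant states.
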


\begin{proof}
We show that any extreme spreadable state $\varphi$ on $\mathbb{A}_\a^\bz$ is of the type above.
By Lemma \ref{factorisation} any such $\varphi$ factorizes as
$\varphi(u_{j_1}^{k_1}\cdots u_{j_l}^{k_l})=\om(u_{j_1}^{k_1})\cdots\om(u_{j_l}^{k_l})$ for all $j_1\neq\cdots \neq j_l, k_1, \cdots, k_l$ in $\bz$,
where $\om$ is a $\Delta_\a^\perp$-invariant state on $C(\bt)$ thanks to Corollary \ref{cor:sprGinv}.\\
If $\frac{\a}{2\pi}$ is irrational, the only such $\om$ is the normalized Lebesgue measure, hence
$\varphi$ is the canonical trace.\\
If $\frac{\a}{2\pi}$ is rational, first note $\Delta_\a^\perp$ is $\{z\in\bt: z^{n_0}=1\}$. The state $\varphi$
must be a product $\times \om$, with $\om$ a  $\Delta_\a^\perp$-invariant state on $C(\bt)$, and such product states exist by Lemma \ref{existencepro}.\\
To conclude, we only need to make sure that our extreme set is compact. But this follows  from the continuity of the bijection
$\om\mapsto\times \om$, $\om$ a $\Delta_\a^\perp$-invariant state.
\end{proof}
\section{Shift-invariant states}
We denote by $\tau$ the $*$-automorphism of  $\mathbb{A}_\a^\bz$ given by
$\tau(u_l)\coloneq u_{l+1}$, for all $l$ in $\bz$. In the sequel, $\tau$ will
be referred to as the shift.\\
The aim of this section is to prove that the compact convex set of all shift-invariant states on
$\mathbb{A}_\a^\bz$ is the Poulsen simplex irrespective of whether $\frac{\a}{2\pi}$ is rational or irrational.
\noindent
In order to accomplish this task, we first need to set some notation. For every $n\geq 1$, we denote  by $\mathcal{C}_{\tau^n}$ the compact convex set of
all $\tau^n$-invariant states on $\mathbb{A}_\a^\bz$.
The maps $\Phi_n: \mathcal{C}_{\tau^n}\rightarrow\mathcal{C}_{\tau^n}$
given by $\Phi_n(\om)=\om\circ\tau$, $\om\in\mathcal{C}_{\tau^n}$, are clearly
affine bijections satisfying  $\Phi_n^n={\rm id}_{\mathcal{C}_{\tau^n}}$.
In particular, the restriction of $\Phi_n$ to ${\rm Extr}(\mathcal{C}_{\tau^n})$, the set of
all extreme points of $\mathcal{C}_{\tau^n}$, is a bijection of  ${\rm Extr}(\mathcal{C}_{\tau^n})$.\\

We start with a couple of prepatory results.
\begin{prop} \label{tausimplex}
For every $n\geq 1$, the set $\mathcal{C}_{\tau^n}$ is a metrizable Choquet simplex. Moreover, any $\om$ in  $\mathcal{C}_{\tau^n}$ is
 $\Delta_\a^\perp$-invariant.
\end{prop}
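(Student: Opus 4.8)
The plan is to treat both assertions at once by transplanting the machinery developed for the group $G$ on $\mathbb{A}_\a^{\bz\left[\frac{1}{2}\right]}$ (Lemma \ref{lem:Gabelian} and Corollary \ref{cor:sprGinv}) to the single automorphism $\tau^n$ acting on $\mathbb{A}_\a^\bz$. Concretely, I regard $\langle\tau^n\rangle\cong\bz$ as the acting group and aim to show that the $C^*$-dynamical system $(\mathbb{A}_\a^\bz,\bz,\tau^n)$ is $\bz$-abelian. Once this is established, the compact convex set $\mathcal{C}_{\tau^n}$ of invariant states is a Choquet simplex by the standard characterisation of the invariant states of a $G$-abelian system (see \cite{S,BR1}), and it is metrizable because $\mathbb{A}_\a^\bz$ is separable, so that its whole state space is weak-$*$ metrizable and $\mathcal{C}_{\tau^n}$ is a closed subset of it. The $\Delta_\a^\perp$-invariance will then drop out of the same computation that yields $\bz$-abelianness, exactly as Corollary \ref{cor:sprGinv} followed from the first step of Lemma \ref{lem:Gabelian}.

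For a fixed $\tau^n$-invariant state $\varphi$, write $U:=U^\varphi_{\tau^n}$ for the implementing unitary and $E_\varphi$ for the orthogonal projection onto its fixed vectors; note $\xi_\varphi\in E_\varphi\ch_\varphi$. The decisive step is the analogue of \eqref{oddcomponent}: for every homogeneous $a$ with $\partial a\in\bz\setminus\Delta_\a$ one has $E_\varphi\pi_\varphi(a)E_\varphi=0$. I would prove this directly from the mean ergodic theorem, the Ces\`aro averages $\frac1N\sum_{j=0}^{N-1}U^j$ converging strongly to $E_\varphi$. Inserting this limit in the middle projection of $X:=E_\varphi\pi_\varphi(a)E_\varphi$ turns $XX^*$ and $X^*X$ into the Ces\`aro limits of $E_\varphi\pi_\varphi(a\,\tau^{jn}(a^*))E_\varphi$ and $E_\varphi\pi_\varphi(a^*\,\tau^{jn}(a))E_\varphi$, respectively. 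For $j$ large the support of $\tau^{jn}(\cdot)$ lies entirely to the right of those of $a,a^*$, so the commutation rules identify $a^*\tau^{jn}(a)$ with the adjoint of $a\,\tau^{jn}(a^*)$ up to the single phase $u_\a(\partial a,\partial a)$; passing to the limit yields $XX^*=u_\a(\partial a,\partial a)\,X^*X$. Since $u_\a(\partial a,\partial a)\neq1$ precisely when $\partial a\notin\Delta_\a$, the positive operators $XX^*$ and $X^*X$ then have spectrum $\{0\}$, whence $X=0$. With this in hand, $\Delta_\a^\perp$-invariance is immediate: for homogeneous $a$ one gets $\varphi(a)=\langle E_\varphi\pi_\varphi(a)E_\varphi\xi_\varphi,\xi_\varphi\rangle=0$ when $\partial a\notin\Delta_\a$, while $\gamma_z(a)=z^{\partial a}a$ with $z^{\partial a}=1$ for $z\in\Delta_\a^\perp$ when $\partial a\in\Delta_\a$, so $\varphi\circ\gamma_z=\varphi$ on the dense subalgebra of homogeneous elements.

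For the remaining, ``even'', part I would argue as in the second step of Lemma \ref{lem:Gabelian}. Let $\mathcal{A}=\overline{\mathrm{span}}\{a:\partial a\in\Delta_\a\}$, a $\tau$-invariant $C^*$-subalgebra since $\tau$ preserves degrees. The restriction of the dynamics to $\mathcal{A}$ is asymptotically abelian along $\{\tau^{jn}\}$: for homogeneous $a,b$ with $\partial a,\partial b\in\Delta_\a$ the reordering phase $u_\a(\partial a,\partial b)$ equals $1$, since $u_\a$ restricts to the trivial character on $\Delta_\a\times\Delta_\a$ by Lemma \ref{subgroup}, so $\tau^{jn}(a)$ and $b$ commute as soon as their supports are disjoint; a routine $\varepsilon/3$ approximation then gives $\lim_j\|[\tau^{jn}(a),b]\|=0$ on all of $\mathcal{A}$. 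By \cite[Proposition 3.1.16]{S} the restricted system is $\bz$-abelian, and the identification $P_\varphi=E_\varphi$ is obtained verbatim from Lemma \ref{lem:Gabelian} through the decomposition $\ch_\varphi=\overline{\pi_\varphi(\mathcal{A})\xi_\varphi}\oplus\ch_{\varphi,-}$, the second summand being annihilated by $E_\varphi$ thanks to the first step. Combining the two steps, $E_\varphi\pi_\varphi(\mathbb{A}_\a^\bz)E_\varphi$ is commutative, i.e. the system is $\bz$-abelian, and both assertions follow.

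I expect the genuine obstacle to be the first step: one must check that the mean ergodic manipulation still isolates the single nontrivial phase $u_\a(\partial a,\partial a)$ even though $\tau^n$-invariance supplies only the sparse family of translations $\{\tau^{jn}\}_j$ rather than the full group $G$ used in Lemma \ref{lem:Gabelian}. Since translating repeatedly by $n$ still drives supports off to $+\infty$, the relevant reorderings become exact for large $j$ and the argument goes through uniformly in $n$; nonetheless the bookkeeping of the phases in the reordering of general homogeneous monomials is where care is required.
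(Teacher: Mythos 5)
Your proposal is correct and follows essentially the same route as the paper: the paper's own proof simply asserts that $\bz$-abelianness of the $\tau^n$-system ``has essentially been shown over the course of the proof of Lemma \ref{lem:Gabelian}'' and that $\Delta_\a^\perp$-invariance is derived as in Corollary \ref{cor:sprGinv}, and your two-step argument (vanishing of $E_\varphi\pi_\varphi(a)E_\varphi$ for $\partial a\notin\Delta_\a$ via the Ces\`aro averages of $U^j$ and the phase $u_\a(\partial a,\partial a)$, then asymptotic abelianness on $\mathcal{A}$ along $\{\tau^{jn}\}$ plus \cite[Proposition 3.1.16]{S}) is precisely the intended adaptation, carried out explicitly. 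The only difference is presentational: you insert the mean ergodic limit directly into $XX^*$ and $X^*X$ rather than phrasing the computation through the deformed commutators $[\cdot,\cdot]_{k,l}$ as in Lemma \ref{lem:Gabelian}, which changes nothing of substance.
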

\begin{proof}
 $\mathcal{C}_{\tau^n}$ is certainly metrizable because even the whole set of all states of
the infinite torus is so.  The fact that $\mathcal{C}_{\tau^n}$ is a Choquet simplex follows
 from $\bz$-abelianity of the corresponding system, which has essentially been shown over the course of the proof of 
Lemma \ref{lem:Gabelian}.  Finally, the invariance under $\Delta_\a^\perp$ of any state in
$\mathcal{C}_\tau$ can be derived as in the proof of Corollary \ref{cor:sprGinv}.
\end{proof}
\begin{prop}\label{decompo}
Let $n\geq 1$ be a fixed natural number. 
A state $\om$ in $\mathcal{C}_{\tau^n}$ lies in $\mathcal{C}_\tau$
if and only if its barycentric measure is $\Phi_n$-invariant. In particular, the map
$\mathcal{C}_{\tau^n}\ni\om \mapsto \frac{1}{n}\sum_{k=0}^{n-1}\om\circ\tau^k\in \mathcal{C}_\tau$ sends
the set of extreme states of $\mathcal{C}_{\tau^n}$ onto the set of the extreme states of 
 $\mathcal{C}_\tau$.
\end{prop}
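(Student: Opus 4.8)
The plan is to run everything through the unique barycentric measures afforded by the Choquet simplex structure of $\mathcal{C}_{\tau^n}$ established in Proposition \ref{tausimplex}. For $\om\in\mathcal{C}_{\tau^n}$ write $\mu_\om$ for its barycentric measure, which is carried by the extreme boundary ${\rm Extr}(\mathcal{C}_{\tau^n})$ (a $G_\delta$ subset of the metrizable simplex, hence a standard Borel space). The first step I would record is that $\Phi_n$ intertwines the barycentric decomposition: since $\Phi_n$ is an affine homeomorphism restricting to a bijection of ${\rm Extr}(\mathcal{C}_{\tau^n})$, the pushforward $(\Phi_n)_*\mu_\om$ is again a probability measure carried by ${\rm Extr}(\mathcal{C}_{\tau^n})$, and its barycenter equals $\Phi_n(\om)$ because $\Phi_n$ is affine and continuous. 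By uniqueness of the barycentric measure in a simplex, this gives $\mu_{\Phi_n(\om)}=(\Phi_n)_*\mu_\om$.

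The first assertion is then immediate. Indeed, $\om\in\mathcal{C}_\tau$ means $\om\circ\tau=\om$, that is $\Phi_n(\om)=\om$; by the intertwining just established together with uniqueness of the barycentric measure, this is equivalent to $(\Phi_n)_*\mu_\om=\mu_\om$, i.e. to $\Phi_n$-invariance of $\mu_\om$.

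For the second assertion I would set $P(\om):=\frac{1}{n}\sum_{k=0}^{n-1}\om\circ\tau^k=\frac{1}{n}\sum_{k=0}^{n-1}\Phi_n^k(\om)$; a direct computation using $\Phi_n^n={\rm id}$ gives $\Phi_n\circ P=P$, so $P$ maps $\mathcal{C}_{\tau^n}$ into $\mathcal{C}_\tau$. If $\psi\in{\rm Extr}(\mathcal{C}_{\tau^n})$, then $P(\psi)$ is the uniform average over the finite $\Phi_n$-orbit of $\psi$, whose barycentric measure is exactly the uniform measure on that orbit. The key elementary observation is that the only $\Phi_n$-invariant probability measure carried by a single finite orbit is the uniform one, since $\Phi_n$ permutes the orbit cyclically. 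Hence if $P(\psi)=\frac{1}{2}(\om_1+\om_2)$ with $\om_i\in\mathcal{C}_\tau$, then $\mu_{P(\psi)}=\frac{1}{2}(\mu_{\om_1}+\mu_{\om_2})$ forces each $\mu_{\om_i}$ to be carried by that orbit and, being $\Phi_n$-invariant by the first assertion, to equal the uniform measure; thus $\om_1=\om_2$ and $P(\psi)$ is extreme in $\mathcal{C}_\tau$.

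It then remains to show that $P$ is onto ${\rm Extr}(\mathcal{C}_\tau)$. I would take $\om\in{\rm Extr}(\mathcal{C}_\tau)$, note that $\mu_\om$ is $\Phi_n$-invariant by the first assertion, and claim it is carried by a single $\Phi_n$-orbit, whence $\om=P(\psi)$ for any $\psi$ in its support. If not, the cyclic group $\bz/n\bz$ generated by $\Phi_n$ on the standard Borel space ${\rm Extr}(\mathcal{C}_{\tau^n})$ would admit a $\Phi_n$-invariant Borel set $A$ with $0<\mu_\om(A)<1$; normalizing $\mu_\om|_A$ and $\mu_\om|_{A^c}$ then produces two distinct $\Phi_n$-invariant probability measures carried by ${\rm Extr}(\mathcal{C}_{\tau^n})$, whose barycenters $\om_1,\om_2$ lie in $\mathcal{C}_\tau$, are distinct (distinct barycentric measures), and satisfy $\om=\mu_\om(A)\,\om_1+\mu_\om(A^c)\,\om_2$, contradicting extremality of $\om$. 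The hard part will be precisely this last measure-theoretic step: producing the invariant Borel set $A$ rests on the fact that the orbit equivalence relation of the finite group $\bz/n\bz$ on the standard Borel space ${\rm Extr}(\mathcal{C}_{\tau^n})$ is smooth, equivalently on the ergodic decomposition of $\Phi_n$-invariant measures, whose ergodic components for a finite group action are exactly the uniform orbit measures.
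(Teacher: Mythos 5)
Your argument is correct and follows essentially the same route as the paper: both prove that $\Phi_n$ intertwines barycentric measures via pushforward (you by affineness of $\Phi_n$ plus uniqueness of maximal measures, the paper by testing against continuous affine functions), deduce the first assertion from uniqueness of the barycentric decomposition, and reduce the second assertion to the fact that the extreme $\Phi_n$-invariant maximal measures are exactly the uniform measures on finite $\Phi_n$-orbits. You merely make explicit (via smoothness of the orbit equivalence relation of the $\bz/n\bz$-action) the step the paper compresses into the remark that minimal $\Phi_n$-invariant sets are precisely the orbits.
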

\begin{proof}
Let $\om$ be any state in  $\mathcal{C}_{\tau^n}$ and let $\om=\int_{{\rm Extr}(\mathcal{C}_{\tau^n})}\varphi {\rm d}\mu(\varphi)$ be
its barycentric decomposition, where $\mu$ is a probability measure supported  on ${\rm Extr}(\mathcal{C}_{\tau^n})$, the set of
extreme points of $\mathcal{C}_{\tau^n}$, which is a $G_\delta$-subset of $\mathcal{C}_{\tau^n}$ by
the general lemma of Choquet, see \cite[3.4.1]{S}.\\
We claim that  $\om\circ\tau$ decomposes as $\om\circ\tau=\int_{{\rm Extr}(\mathcal{C}_{\tau^n})}\varphi {\rm d}\widetilde{\mu}(\varphi)$,
where $\widetilde{\mu}$ is given by $\widetilde{\mu}(E)\coloneq\mu(\Phi_n^{-1}(E))$, for any Borel subset  $E\subset {\rm Extr}(\mathcal{C}_{\tau^n})$.
Therefore, $\om$ is shift invariant if and only if  
$$\int_{{\rm Extr}(\mathcal{C}_{\tau^n})}\varphi {\rm d}\mu(\varphi)=\int_{{\rm Extr}(\mathcal{C}_{\tau^n})}\varphi {\rm d}\widetilde{\mu}(\varphi)\,,$$
which is possible if and only if $\mu=\widetilde{\mu}$ by the uniqueness of the decomposition.\\
All we need to do is prove the claim. By definition of barycentric decomposition, the equality 
$f(\om)=\int_{{\rm Extr}(\mathcal{C}_{\tau^n})}f(\varphi) {\rm d}\mu(\varphi)$ holds for any
continuous affine function $f: \mathcal{C}_{\tau^n}\rightarrow\br$. Now, for any such $f$, the composition $f\circ\Phi_n$ is still
affine and continuous, which means 
\begin{align*}
f(\om\circ\tau)&=f\circ\Phi_n\, (\om)=\int_{{\rm Extr}(\mathcal{C}_{\tau^n})}f\circ\Phi_n\,(\varphi) {\rm d}\mu(\varphi)\\
&=\int_{{\rm Extr}(\mathcal{C}_{\tau^n})}f(\varphi) {\rm d}\widetilde{\mu}(\varphi)\,.
\end{align*}
Since the above equality holds for any continuous affine function $f$, we have 
$\om\circ\tau=\int_{{\rm Extr}(\mathcal{C}_{\tau^n})}\varphi {\rm d}\widetilde{\mu}(\varphi)$, as maintained.
For the second part, a shift-invariant state is extreme if and only if
its representing measure (as an element of the bigger convex set) is supported on
a minimal $\Phi_n$-invariant set. But minimal  $\Phi_n$-invariant sets are exactly
the orbits of $\Phi_n$, that is sets of the form $\{\om, \om\circ\tau, \ldots, \om\circ\tau^{n-1}\}$ for some
$\om$ in $\mathcal{C}_{\tau^n}$.\\
\end{proof}

%
We are ready to state the main result of this section.
\begin{thm}\label{Poulsen}
 $\mathcal{C}_\tau$ is (affinely homeomorphic with) the Poulsen simplex.
\end{thm}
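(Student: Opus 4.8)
The plan is to invoke the deep characterization theorem of Lindenstrauss, Olsen, and Sternfeld: up to affine homeomorphism, there is a unique metrizable Choquet simplex whose extreme boundary is dense. Thus it suffices to verify two facts about $\mathcal{C}_\tau$, the set of shift-invariant states: first, that it is a metrizable Choquet simplex, and second, that its set of extreme points $\mathrm{Extr}(\mathcal{C}_\tau)$ is dense in $\mathcal{C}_\tau$. The first property is essentially already available: $\mathcal{C}_\tau$ coincides with $\mathcal{C}_{\tau^n}$ for $n=1$, which is a metrizable Choquet simplex by Proposition \ref{tausimplex}. The entire weight of the theorem therefore rests on establishing the density of the extreme boundary, which is the main obstacle since it is precisely this feature that distinguishes the Poulsen simplex from a Bauer simplex.

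First I would set up an approximation scheme exploiting the machinery of the previous propositions. The guiding idea is that an arbitrary $\tau$-invariant state should be approximable in the weak-$*$ topology by extreme $\tau$-invariant states, and the natural way to manufacture extreme points is to use states that are genuinely periodic or product-like on large blocks and then symmetrize. Concretely, given a state $\om \in \mathcal{C}_\tau$ and a finite collection of generators together with $\eps>0$, I would first approximate $\om$ on this finite set by a state that factorizes across sufficiently separated blocks of indices. One then builds, for suitable $n$, a state in $\mathcal{C}_{\tau^n}$ which is extreme there (for instance an appropriate infinite tensor-type product over a fundamental domain of $\tau^n$, engineered to be a factor state hence extreme), and passes to $\mathcal{C}_\tau$ via the averaging map $\om \mapsto \frac{1}{n}\sum_{k=0}^{n-1}\om\circ\tau^k$ from Proposition \ref{decompo}. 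That proposition guarantees this averaging map carries $\mathrm{Extr}(\mathcal{C}_{\tau^n})$ onto $\mathrm{Extr}(\mathcal{C}_\tau)$, so the symmetrized states we produce are genuinely extreme in $\mathcal{C}_\tau$.

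The crux is then to show that, as $n\to\infty$, these symmetrized extreme states converge weak-$*$ to the original $\om$ on any fixed finite set of generators. The heuristic is that an extreme $\tau^n$-invariant state built to agree with $\om$ on a single fundamental block, when averaged over the $n$ translates, reproduces $\om$'s values on any fixed monomial up to an error of order $O(1/n)$ coming from the finitely many translates whose supports straddle block boundaries, while the overwhelming majority of translates land inside a single block where the product structure reproduces $\om$ exactly. Making this precise is where the real work lies: I would need to control the matrix elements $\frac{1}{n}\sum_{k=0}^{n-1}\psi_n(\tau^k(u_{j_1}^{k_1}\cdots u_{j_l}^{k_l}))$ and verify they approach $\om(u_{j_1}^{k_1}\cdots u_{j_l}^{k_l})$, using shift-invariance of $\om$ and the noncommutative commutation phases, which contribute only to a vanishing fraction of the terms.

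The step I expect to be the main obstacle is this density argument, and within it the delicate point is constructing, for each $n$, the right extreme $\tau^n$-invariant approximant $\psi_n$: it must simultaneously be a factor (hence extreme) state of $\mathcal{C}_{\tau^n}$ and agree with $\om$ closely enough on a fundamental domain that the averaging reproduces $\om$ in the limit. The commutation relations $u_lu_k = e^{i2\pi\a}u_ku_l$ complicate the naive "product over blocks" construction, so I would lean on the fixed-point/conditional-expectation techniques developed in Lemma \ref{existencepro} and on $\bz$-abelianity (established in Proposition \ref{tausimplex}) to guarantee that factor states are genuinely extreme and sufficiently plentiful. Once the density of $\mathrm{Extr}(\mathcal{C}_\tau)$ is secured, the conclusion is immediate from \cite[Theorem 2.3]{Lind}.
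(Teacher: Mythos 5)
Your strategy matches the paper's proof essentially step for step: invoke \cite[Theorem 2.3]{Lind}, get the simplex and metrizability from Proposition \ref{tausimplex}, and establish density of the extreme boundary by a Parthasarathy-style block construction --- restrict the given $\om\in\mathcal{C}_\tau$ to a block of length $2n+1$, form the infinite product of that restriction over the translated blocks (using the $\Delta_\a^\perp$-fixed-point conditional expectation of Lemma \ref{existencepro} to make the product legitimate despite the commutation phases), note that the resulting state is strongly clustering hence extreme in $\mathcal{C}_{\tau^{2n+1}}$, average it over $\tau^0,\dots,\tau^{2n}$ to land in ${\rm Extr}(\mathcal{C}_\tau)$ via Proposition \ref{decompo}, and check the $O(1/n)$ boundary error. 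The one genuine omission is that you never verify $\mathcal{C}_\tau$ is not a singleton: a one-point set is a metrizable Choquet simplex with (vacuously dense) extreme boundary, so the Lindenstrauss characterization alone does not rule it out, and this is not an idle worry here --- Theorem \ref{spreadability} shows that for $\frac{\a}{2\pi}$ irrational the spreadable states \emph{do} collapse to the single trace, so one must exhibit a shift-invariant state other than ${\rm tr}$. The paper does this with the same block-product machinery, starting from a $\Delta_\a^\perp$-invariant state on $\mathcal{B}_0$ different from the trace and using strong clustering to see that the symmetrized product cannot equal ${\rm tr}$; your argument needs this final paragraph to be complete.
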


\begin{proof}
 $\mathcal{C}_\tau$ metrizable by Proposition \ref{tausimplex}. Therefore, thanks to
\cite[Theorem 2.3]{Lind} it is enough to show  that  ${\rm Extr}(\mathcal{C}_{\tau})$ is dense
in $\mathcal{C}_{\tau}$, and that $\mathcal{C}_\tau$ is not trivial.\\
Both properties can be seen by a suitable adaptation of a technique  of
Parthasarathy, see \cite[Theorem 3.1]{Part}.\\
To begin with, for every fixed $n\geq 1$, let us define the countable family of $C^*$-subalgebras $\mathcal{B}_r\coloneq C^*(u_l:  -n+r(2n+1)\leq l\leq  r(2n+1)+n)$, $r$ in $\bz$.\\
We start by showing density of ${\rm Extr}(\mathcal{C}_{\tau})$
in $\mathcal{C}_{\tau}$. To this end, let $\varphi$ be any state in $\mathcal{C}_{\tau}$. Denote by $\varphi_0$ the restriction of $\varphi$ to $\mathcal{B}_0$.
Note that, for each $r$ in $\bz$, $\tau^{r(2n+1)}$ sends $\mathcal{B}_0$ onto $\mathcal{B}_r$, meaning
that each $\mathcal{B}_r$ is $^*$-isomorphic with $\mathcal{B}_0$. This allows us to think of
$\varphi_0$ as a state defined on each $\mathcal{B}_r$. To ease the notation, though, we will continue to denote by $\varphi_0$ the state $\varphi_0\circ\tau^{-r(2n+1)}$.\\
We claim that there exists a state
$\om_n$ on $\mathbb{A}_\a^\bz$ such that
$$\om_n(x_1  \dots x_\ell)=\varphi_0(x_1)\cdots\varphi_0(x_\ell)\,$$
for all $x_1\in\mathcal{B}_{r_1}, \ldots, x_\ell\in \mathcal{B}_{r_\ell}$.
By construction, $\om_n$ is invariant under the action of $\tau^{2n+1}$. Moreover, it is seen at once to be strongly clustering w.r.t. $\tau^{2n+1}$, {\it i.e.}
$\lim_k \om_n(a\tau^{k(2n+1)}(b))=\om_n(a)\om_n(b)$ for all $a, b$ in $\mathbb{A}_\a^\bz$.
In particular, $\om_n$ is extreme in the compact convex set of all $\tau^{2n+1}$-invariant states
by  \cite[Theorem 4.3.22]{BR1} and \cite[Proposition 3.1.10]{S}.\\
For every $n\geq 1$, define $\varphi_n\coloneq \frac{1}{2n+1}\sum_{k=-n}^n \om_n\circ \tau^k$.
By definition, the state $\varphi_n$ is shift invariant. More importantly, it is extreme in $\mathcal{C}_\tau$
thanks to Proposition \ref{decompo}. The next thing to do is to show that the sequence $\{\varphi_n: n\geq 1\}$
$*$-weakly converges to $\varphi$.\\
By a standard density argument, it is enough to verify that, for every $s>0$, one has the sought convergence  on
monomials of the type $x=u_{l_1}^{n_1}\cdots u_{l_j}^{n_j}$ with $-s\leq l_1<\ldots<l_j \leq s$.
\begin{align*}
|\varphi_n(x)-\varphi(x)|= \left|\frac{1}{2n+1}\sum_{k=-n}^n\om_n(\tau^k(x)) -\varphi(x) \right|\leq \frac{4s}{2n+1}\xrightarrow[n]{}0\, .
\end{align*}
We now move on to prove the claim. Denote by
$\mathcal{B}_r^{\Delta_\a^\perp}\subset\mathcal{B}_r$ the sublagebra invariant under the action of
$\Delta_\a^\perp$ (we recall that $\Delta_\a^\perp=\bt$ if $\frac{\a}{2\pi}$ is irrational, and 
$\Delta_\a^\perp=\{z\in\bt: z^{n_0}=1\}$ if $\frac{\a}{2\pi}$ is rational).
Note that $\mathcal{B}_r^{\Delta_\a^\perp}$ and  $\mathcal{B}_s^{\Delta_\a^\perp}$ commute for any $r\neq s$.
Therefore, the $C^*$-subalgebra generated by all the subalgebras $\mathcal{B}_r^{\Delta_\a^\perp}$ is isomorphic with the (maximal) tensor product $\otimes_{r\in\bz} \mathcal{B}_r^{\Delta_\a^\perp}$
Therefore, the fixed-point subalgebra
identifies with the (maximal) infinite tensor product $ \otimes_{r\in\bz} \mathcal{B}_r^{\Delta_\a^\perp}$ by a minor variation of  the proof
of Proposition \ref{copy}.
Averaging the action of the infinite product of $\Delta_\a^\perp$ with itself, one obtains
a conditional expectation $F: \mathbb{A}_\a^\bz\rightarrow \otimes_{r\in\bz} \mathcal{B}_r^{\Delta_\a^\perp}\cong C^*(\mathcal{B}_r^{\Delta_\a^\perp}: r\in\bz)$, which is
 multiplicative on the product of two elements lying in two different factors of the tensor product.
The state $\om$ can be obtained exactly as in Lemma \ref{existencepro} by composing the usual product state on a tensor product with $F$, because shift-invariant states are $\Delta_\a^\perp$-invariant  thanks to
Proposition \ref{tausimplex}.\\
We conclude by observing that $\mathcal{C}_\tau$ is never trivial (even when $\a$ is irrational), {\it i.e.} it is not the singleton
of the canonical trace. Indeed, it is enough to fix some $n>0$ and pick a
$\Delta_\a^\perp$-invariant state $\varphi$ on $\mathcal{B}_0$ different from (the restriction to $\mathcal{B}_0$ of) the canonical trace.
As above, there exists a state $\om$ on $\mathbb{A}_\a^\bz$ such that
$$\om(x_1  \dots x_\ell)=\varphi(x_1)\cdots\varphi(x_\ell)\,$$
for all $x_1\in\mathcal{B}_{r_1}, \ldots, x_\ell\in \mathcal{B}_{r_\ell}$
(we are thinking of $\varphi$ as a state defined on all $\mathcal{B}_r$'s as we did in the first part of the proof).
Clearly, $\om$ is $\tau^{2n+1}$-invariant and different from the canonical trace.  Now, the state $\frac{1}{2n+1}\sum_{k=-n}^n \om\circ\tau^k$ is 
shift-invariant and different from the trace, for the equality $\frac{1}{2n+1}\sum_{k=-n}^n \om\circ\tau^k={\rm tr}$ would imply
$\om\circ\tau^k=\om$ for all $-n\leq k\leq n$ as ${\rm tr}$ is strongly clustering w.r.t $\tau^{2n+1}$, which is not the case we are in.
\end{proof}
We conclude the section by proving that spreadable states make up a face of the larger
convex set of all shift-invariant states.
\begin{prop}
The convex set of all spreadable states on $\mathbb{A}_\a^\bz$ is a face of the set of all shift-invariant states. 
\end{prop}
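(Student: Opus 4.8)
The plan is to leverage the fact that both convex sets involved are Choquet simplices, so that the face property can be reduced to a statement about extreme points. First I would record the two elementary facts that make the assertion meaningful. Every spreadable state is in particular shift-invariant, since the shift $\tau$ is itself a strictly increasing map (indeed $\tau\in\bj_\bz$); and the set $\mathrm{Spr}$ of all spreadable states is a weak*-closed convex subset of $\mathcal{C}_\tau$, being the intersection over all increasing $h$ of the closed hyperplanes $\{\varphi:\varphi\circ\a_h=\varphi\}$. By Theorem \ref{spreadability}, $\mathrm{Spr}$ is a Bauer simplex whose extreme points are exactly the product states $\times\om$, while by Proposition \ref{tausimplex} (cf. Theorem \ref{Poulsen}) the ambient set $\mathcal{C}_\tau$ is itself a Choquet simplex.

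The structural remark I would then use is that for a closed subsimplex $F$ of a Choquet simplex $K$ it suffices, in order that $F$ be a face, to check that ${\rm Extr}(F)\subseteq {\rm Extr}(K)$. Indeed, granting this inclusion, the unique representing measure in $K$ of any $\varphi\in F$ is forced (by uniqueness of the barycentric decomposition, together with its affine dependence on the barycenter) to be supported on ${\rm Extr}(F)$; positivity then propagates that support to the representing measures of the two components of any splitting $\varphi=\lambda\psi_1+(1-\lambda)\psi_2$ with $\psi_1,\psi_2\in K$, so that $\psi_1,\psi_2$ are themselves barycenters of measures carried by ${\rm Extr}(F)$ and hence lie in $F$.

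Thus the whole statement reduces to the key step: \emph{every extreme spreadable state $\times\om$ is extreme in $\mathcal{C}_\tau$}, that is, ergodic for the shift. I would establish this through the stronger property that $\times\om$ is strongly clustering with respect to $\tau$, namely $\lim_k \times\om(a\,\tau^k(b))=\times\om(a)\,\times\om(b)$ for all $a,b$. By density it is enough to treat monomials $a,b$ in the generators supported on finite index windows. For $k$ large the window of $\tau^k(b)$ lies entirely above that of $a$, so no deformation phase is exchanged between the two blocks; evaluating $\times\om$ by sorting indices within each block separately and invoking the factorization of Lemma \ref{existencepro} gives $\times\om(a\,\tau^k(b))=\times\om(a)\,\times\om(\tau^k(b))=\times\om(a)\,\times\om(b)$, the last equality by shift-invariance of $\times\om$. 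Hence the clustering limit holds (in fact it is eventually exact on monomials), and strong clustering yields extremality in $\mathcal{C}_\tau$ by \cite[Theorem 4.3.22]{BR1} and \cite[Proposition 3.1.10]{S}, exactly as in the proof of Theorem \ref{Poulsen}.

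Putting the pieces together, given $\varphi\in\mathrm{Spr}$ with $\varphi=\lambda\psi_1+(1-\lambda)\psi_2$, $\psi_1,\psi_2\in\mathcal{C}_\tau$, $0<\lambda<1$, the Bauer decomposition $\varphi=\int \times\om\,d\nu$ is, by the key step, a decomposition over ${\rm Extr}(\mathcal{C}_\tau)$; uniqueness of the barycentric measure in the simplex $\mathcal{C}_\tau$ identifies it with $\mu_\varphi=\lambda\mu_{\psi_1}+(1-\lambda)\mu_{\psi_2}$, whence positivity forces each $\mu_{\psi_i}$ to be supported on ${\rm Extr}(\mathrm{Spr})$, so that $\psi_1$ and $\psi_2$ are spreadable. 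The main obstacle is precisely the clustering computation, where one must be certain that the deformation phases $e^{i2\pi\a(\cdot)}$ do not obstruct the factorization; this is what the disjoint-window choice of large $k$ settles.
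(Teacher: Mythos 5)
Your proof is correct and follows essentially the same route as the paper: reduce the face property to showing that extreme spreadable states are extreme in $\mathcal{C}_\tau$ (the paper delegates this reduction to the argument of \cite[Theorem 3.5]{CDRCAR}, which is the barycentric-measure argument you spell out), and then observe that the extreme spreadable states are product states, which are strongly clustering with respect to the shift and hence ergodic. Your explicit verification of the clustering on monomials via disjoint index windows is a correct filling-in of what the paper leaves implicit.
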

\begin{proof}
Arguing as in the proof of \cite[Theorem 3.5]{CDRCAR}, it suffices to show that any extreme spreadable state is extreme in the bigger convex
of all shift-invariant states. By Theorem \ref{spreadability}, extreme spreadable states are product states, and such states are strongly clustering
w.r.t. the shift.
\end{proof}

\section*{Acknowledgments}
\noindent
All authors acknowledge  the support of the Italian INDAM-GNAMPA Project Code CUP\_E55F22333270001 and
the Italian PNRR MUR project PE0000023-NQSTI.\\


\end{document}